\newtheorem{lemma}{Lemma}[section]
\newtheorem{theorem}{Theorem}[section]
\newtheorem{example}{Example}[section]
\newtheorem{corollary}{Corollary}[section]
\theoremstyle{plain}
\newtheorem{definition}{Definition}[section]
\theoremstyle{remark}
\numberwithin{equation}{section}
\def\PO{{POSS \xspace}}
\def\G0{X^{>}}
\def\Gd{Y}
\def\VMm{{\rm\textsf{V-minmax}}}
\def\nexists{\,\,\,/\!\!\!\exists\,\,}
\def\|{\,\,\,|\,\,\,}
\begin{document}

\title{On two solution concepts in a class of multicriteria games: minimax and Pareto optimal security payoffs
}


\author{Justo Puerto$^1$,
        Federico Perea$^2$ \\
        \\
           $^1$ IMUS, Universidad de Sevilla, Spain (puerto@us.es) \\
           $^2$ Instituto Tecnol\'ogico de Inform\'atica, \\
           Universitat Polit\`ecnica de Val\`encia, Spain (perea@eio.upv.es)
}



\date{}

\maketitle

\begin{abstract}
In  this  paper  we compare two solution concepts for general multicriteria zero-sum
matrix games: minimax and Pareto-optimal security payoff vectors. We characterize the two criteria based on properties similar to the ones that have been used
in the corresponding counterparts in the single  criterion  case,
although they need to be complemented with two new consistency
properties. Whereas in standard single criterion games minimax and optimal security payoffs coincide, whenever we have
multiple criteria these  two solution concepts differ. We provide
explanations for the common roots  of these two  concepts and highlight the intrinsic differences between them.

\textbf{Keywords:} Multicriteria games ; payoffs ; characterization
\end{abstract}

\section{Introduction}

Multicriteria zero-sum matrix (MZSM) games are an extension of the standard
two-person zero-sum games, introduced by von Neumann
\cite{newmann28}, that allow to handle simultaneous confrontation of
two rational agents in several scenarios. The first game theoretical
characterizations of minimax values in MZSM games is due to Shapley
\cite{shapley59}. In its introductory note, F.D.
Rigby remarks the importance of these games:
\textit{``The topic of games with vector payoffs is one which could
be expected to attract attention on the basis of its intrinsic
interest''}.  Nevertheless, the development of the theory of multicriteria games has not been as successful as expected. There is a number of references in the literature, although its intrinsic difficulty, mainly due to the lack of total orderings among players' payoffs, has diminished the interest of researchers (see for instance \cite{BTA89}, \cite{fernandez96}, \cite{ghose89}, \cite{ghose91}, \cite{nieuwenhuis83}, \cite{ppt07}, \cite{shapley59}, \cite{Voor99} and \cite{Voorequi} and the references therein). In spite of that, the goal of further developing the analysis of multicriteria games should not be forgotten. In fact, each competitive situation that can be modeled as a scalar zero--sum game has its counterpart as a
multicriteria zero--sum game when more than one scenario has to be
compared simultaneously. Moreover, different scenarios do not need to have the same set (number) of strategies which makes the analysis even more challenging, as in some scenarios new strategies may appear.  In these situations, applying an extension of the minimax rationale would imply to use the same strategy
 in the different scenarios. Thus, conflicting interests
appear not only between different decision-makers but also within each
individual, due to the different criteria they may have. For instance, the production policies of two firms which
are competing for a market can be seen as a scalar game. However,
when they compete simultaneously in several markets and the returns
in each one of them cannot be aggregated, the multicriteria approach
{naturally leads} to a multicriteria game. The main criticism made to
this approach is its difficulty to be applied because in most cases the
solutions (values) are not unique. Therefore, new solution
concepts have been proposed and compared with the
existing ones.

One of the most attractive alternatives to minimax payoff vectors is
the concept of Pareto-optimal security level vectors (POSLV). Pareto-optimal
security strategies (POSS) were introduced by Ghose and Prasad
\cite{ghose89} as a solution concept in multicriteria zero-sum
matrix games, extending the idea of security level strategies to more
than one criterion where it is allowed to use different strategies in each scenario.
Ghose \cite{ghose91} characterized this solution
concept as minimax strategies in a serial weighted zero-sum game,
whereas Voorneveld \cite{Voor99} gave an alternative
characterization of these strategies as minimax strategies of an
\textit{amalgamated} game (see \cite{BGPT96} for the concept of
amalgamation of games). Alternatively, Fern\'andez and Puerto
\cite{fernandez96} provided a way to jointly determine POSS and their corresponding set of payoffs by solving a
certain multiobjective linear program.

The main goal of this paper is to highlight the connections and
differences between minimax and Pareto-optimal security strategies in
multicriteria zero-sum matrix games.

{The rest of the paper is organized as follows. In Section \ref{sec: lit review} we give some references on related work. In Section \ref{sec: model} we present
 the basic definitions of the multicriteria games, in which we will define the two solution concepts to be later studied.  In Section \ref{sec:minmax} we formally define the minimax solution concept, and characterize it using a number of axioms. Section \ref{sec:def_POSS} is devoted to
the definition and characterizations of the set of Pareto-optimal security
level vectors. Section \ref{sec: independence} is devoted to analyzing the {pairwise} logical independence of the properties used in the presented characterizations.} {Next in Section \ref{s:6} we analyze and prove the relationships between the two solution concepts.} The paper
ends with some conclusions drawn from the results in the paper.

\section{Literature review}\label{sec: lit review}
To provide the necessary properties for characterizing these
two procedures we start by reviewing the literature regarding axiomatizations of
solution concepts in zero-sum matrix games.

The first characterization of the value of a zero-sum matrix game is
due to Vilkas \cite{vil}. Later, Tijs \cite{Tijs81} addressed the
same problem, whereas more recently  Norde and Voorneveld
\cite{norde04} and Carpente et al. \cite{car}  provided different
axiomatizations for the value of standard zero-sum matrix games. Hart et al. \cite{Hart94} give a Bayesian foundation of {minimax} values.

Although there exist axiomatic foundations for the minimax
value of one-criteria zero-sum games, there is no characterization for the set
of minimax values in the multicriteria version. On the
other hand, there are several axiomatizations of  Nash equilibrium
strategies (see e.g. Peleg and Tijs \cite{pel96}, Peleg et al.
\cite{pel96bis} and Norde et al. \cite{norde96}) some of these carry
over to multicriteria games, as shown in  Borm et al. \cite{BTA89}
and Voorneveld et al. \cite{Voorequi}. Nevertheless, these are axiomatizations for strategy sets rather than for payoffs, they use different sets of axioms, and they do not clearly show
the relationship between the two solutions.

In standard single criterion games, minimax and optimal security
payoffs coincide. Nevertheless, whenever we have  multiple criteria, which in turns may imply to have a different set (and number) of strategies in each component game,
these two concepts differ: uniqueness and coincidence with security
payoffs are no longer satisfied. This fact raises the question of
which are the common roots and which are the main differences between these two solution concepts. We will answer this question by providing characterizations that show the common properties and
 the differences between them.

We here characterize extensions of minimax and Pareto-optimal security payoffs to general
multicriteria zero-sum matrix games. Our approach uses classical properties in
game theory and decision theory (objectivity, column dominance, column
elimination, row dominance, row elimination and consistency). The contribution of this paper is twofold:
\begin{itemize}
\item we characterize solution sets rather than single value solutions which makes the
analysis more involved. In this regard, we will be interested in finding the largest set of payoffs compatible with the proposed properties (see e.g. Calleja et al. \cite{CRT08} or Gerard-Varet and Zamir \cite{GVZ}).
\item we use a new consistency property that deals with the persistence of any solution payoff of a multicriteria game, with given dimension on the space of criteria,  on some lower dimension multicriteria game. 
The difference between this new consistency property and the traditional one is the way in which solutions for a game with $k$ criteria transform into solutions for a game with $k-1$ criteria. Extended minimax payoff vectors in a multicriteria game with $k$-criteria can be converted to extended minimax payoff vectors in a new $(k-1)$-criteria game that makes a convex combination of two of the original payoff matrices. However, Pareto-optimal security payoff vectors become solutions of a game with $k-1$ criteria but with an amalgamation of strategies from two of the previous matrices. This difference is crucial and distinguishes the two solution concepts.
\end{itemize}

\section{Definitions}\label{sec: model}
 Let us begin by recalling the concept of two-person zero-sum games.
\begin{definition}
A scalar two-person zero sum game is characterized by a payoff matrix $A \in \mathbb{R}^{m \times n}$, such that if player I (the row player who wants to minimize payoffs) plays his strategy $i$ and player II (the column player who wants to maximize payoffs) plays his strategy $j$, then the row player`s payoff is $a_{ij}$ and the column player's is $-a_{ij}$, for $i=1,...,m, j=1,...,n$. The set of (mixed) strategies for player I is
$$X_m = \{ x\in \mathbb{R}^m :\sum _{i=1}^m x_i =1;\ x_i\ge 0, \ \ \
i=1, ... ,m \},$$
whereas the set of (mixed) strategies for player II is

$$Y_n = \{ y \in \mathbb{R}^n :\sum _{j=1}^n y_j =1;\ y_j\ge 0, \ \ \
j=1, ... ,n \}.$$

Note that if player I plays $x \in X_m$ and player II plays $y \in Y_m$, the expected payoff of the game is $x ^t A y \in \mathbb{R}$. We define the value of a game $A$ as $val(A):=\max_{y \in Y_n} \min_{x \in X_m} x^t A y =  \min_{x \in X_m} \max_{y \in Y_n} x^t A y$, which always exists (see \cite{newmann28}).
\end{definition}

{The traditional multicriteria approach (see Definition \ref{def: multicriteriaclassic}) assumes that the payoffs of the players are vectors instead of scalars.}
\begin{definition}\label{def: multicriteriaclassic}
{Consider a payoff matrix $A =(A(1),...,A(k)) \in \mathbb{R}^{m \times n \times k},$ with $A(\ell) \in \mathbb{R}^{m \times n} \ \forall \ \ell = 1,...,k$. If player I plays $x \in X_m$ and player II plays $y \in Y_n$, the expected payoff for player I is $x ^t A y = (x^t A(1) y,...,x^tA(k) y) \in \mathbb{R}^k$, and player II gets the opposite.} Let $\mathcal{D}_1$ denote the class of games defined by a matrix $A \in \mathbb{R}^{m \times n \times k}$, and the strategy sets $X_m$ and $Y_n$ for players I and II, respectively.
\end{definition}
Note that a game in $\mathcal{D}_1$ is uniquely characterized by its payoff matrix, since the strategy sets only depend on the dimensions of the matrix. We therefore may identify a game in this class by its payoff matrix.

To compare the two solution concepts that are the goal of this paper, we analyze them in an extension of the general class of  multicriteria two-person zero sum games $\mathcal{D}_1$ just introduced, defined as follows:

\begin{definition}
Consider a payoff matrix $A=(A(1), ... ,A(k))$  with  $A(\ell) \in \mathbb{R}^{m \times n_{\ell}}$ for $\ell=1, ... ,k$. Let $G_k=\bigcup _{m,n_1, ... ,n_k\in \mathbb{N}} \mathbb{R}^{m\times n_1}\times ... \times \mathbb{R}^{m\times n_k}$ be the set of all such $k$-criteria matrices. Define the strategy spaces for players I and II as $X_m$ and,
$$\begin{array}{rl}
Y_{(n_1\ldots  n_k)}=&\{ y=(y(1),\ldots,y(k))\in \mathbb{R}^{n_1}\times \ldots \times \mathbb{R}^{n_k} :\sum _{j=1}^{n_{\ell}} y(\ell)_j =1;\ y(\ell)_j\ge 0, \\
 & j=1,\ldots ,n_{\ell},\; \ell=1\ldots k \}. \\
\end{array}$$

Note that player I has to play the same strategy in all $k$ scenarios, whereas player II can choose different strategies in different scenarios. We remark that the pure strategies for both players are the extreme points of $X_m $ and $Y_{(n_1\ldots n_k)} $.

Choosing the strategy $x\in X_m $ for player I and the strategy $y\in Y_{(n_1\ldots n_k)}$ for player II implies that the payoff of player I is
\begin{equation}
v(x,y)=x^tAy=(v_1(x,y(1)),\ldots ,v_k(x,y(k))),
\end{equation}
where
\begin{equation}
v_{\ell}(x,y({\ell}))=x^tA({\ell})y({\ell}) \qquad {\ell}=1,\ldots ,k.
\end{equation}
Let $\mathcal{D}_2$ denote the class of games defined by a tridimensional payoff matrix $A = (A(1),...,A(k)) \in \mathbb{R}^{m\times n_1}\times \ldots \times \mathbb{R}^{m\times n_k}$, and the strategy sets $X_m$ and $Y_{(n_1,...,n_k)}$ for players I and II, respectively.
\end{definition}
Note that a game in $\mathcal{D}_2$ is uniquely characterized by its payoff matrix, since the strategy sets only depend on the dimensions of the matrix. We therefore may identify a game in this class by its payoff matrix.

For the sake of notation we will refer to $\mathbb{R}^{m\times n_1}\times \ldots \times \mathbb{R}^{m\times n_k}$ as $\mathbb{R}^{m \times (n_1,...,n_k)}$.
The following example illustrates this class of games.
\begin{example}\label{ex:gameD2}
Let $A$ and $B$ be two companies that form a duopoly competing in the same sector. The payoffs of this game will be monetary benefit and public image, measured in units that are not easily quantifiable economically. Company $A$ has to face one decision: whether or not to invest in advertising (strategies $Y$ and $N$, respectively). Company $B$ has to face two decisions: whether or not to invest in advertising (strategies $Y$ and $N$, respectively), and what to do about the polluting emissions its factory produces. Three different strategies are possible in this scenario: increase, leave as it is, decrease (denoted by $I,L,D$).

Both companies know that if the two of them invest in advertising or none of them does, then their monetary benefit does not increase nor decrease. On the contrary, if one of them invests and the other does not, the company investing will have one extra unit of benefit and the other company one less unit of benefit.

If $B$ increases the polluting emissions and $A$ invests in publicity, $A$ will use this fact in its campaign and the public image of $B$ will deteriorate and that of $A$ will improve by two units. In case $A$ would not invest in advertising, the extra emissions will be somehow found out and the public image of $B$ will deteriorate and that of $A$ will improve by 1 unit. If $B$ decreases its emissions and $A$ does not invest in publicity, the public image of $B$ will improve and that of $A$ will deteriorate by one unit. An advertising campaign of $A$ will compensate this fact and the improvement/deterioration in public image will be of 0.5 units only.

This situation can be modeled as a game in $\mathcal{D}_2$, with $m=k=n_1=2, n_2 = 3$ and the payoff matrix
$ A = (A(1),A(2))$, where $A(1)$ and $A(2)$ are detailed in the next matrix:

$$  A = (A(1),A(2)) =
\left(
  \begin{array}{cc}
    \left(
      \begin{array}{cc}
        0 & -1 \\
        1 & 0 \\
      \end{array}
    \right) ,
     &
     \left(
       \begin{array}{ccc}
         -2 & 0 & 0.5 \\
         -1 & 0 & 1 \\
       \end{array}
     \right)
     \\
  \end{array}
\right)
$$
Note that, in order to be consistent with the fact that player I is the minimizer and player II is the maximizer, the payoffs described before have been multiplied times -1 when building matrix $A$.
To illustrate, two possible such strategies are:
\begin{enumerate}
\item $x=(1,0), y=((0,1),(0,1,0))$. They are two pure strategies that yield an expected value of the game equal to $(0,0)$ (the payoff for player I in each of the two criteria, player II gets the opposite).
\item $x= (0.5,0.5), y = ((0.25,0.75),(0.5,0,0.5))$. They are two mixed strategies that yield an expected value equal to $(-0.25,-0.375)$.
\end{enumerate}
\end{example}

Note that in this framework we identify a multicriteria game with an element in the cartesian product of the space of matrices. At times, the scalar zero-sum game defined by the matrix $A(\ell)$, $\ell=1\ldots k$, will be called the $\ell$-component game of the multicriteria game.

In the sequel, the transpose operator $^t$ will be omitted when its use
is clear. Given a matrix $B \in \mathbb{R}^k$, we will adopt the following notation:
\begin{itemize}
\item $B^{i\cdot}$ refers to the $i$-th row
of $B$.
\item $B^{\cdot j}$ refers to the $j$-th column of $B$.
\end{itemize}
We will use the following ordering.
\begin{definition}
Given two vectors $a,b \in \mathbb{R}^k$, $a \leq b$ if $a_{\ell} \leq b_{\ell}$ for $\ell = 1,..., k$, and $a \lneq b$ if $a \leq b$ and $a \neq b$.

$v-\min$ ($v-\max$) stands for the set of minimal (maximal) elements with respect to the componentwise order of $\mathbb{R}^k$.
\end{definition}

\section{{Minimax} in $\mathcal{D}_1$}\label{sec:minmax}

The multicriteria extension of the concept of minimax payoff
looses some of the interesting properties shown in the scalar case:
uniqueness and coincidence with security payoffs. In spite of that,
it is still possible to establish the existence of such strategies under rather standard hypothesis; for instance if the strategy set is compact the set of $minimax$ payoff
vectors is non-empty \cite{nieuwenhuis83}.

The rationale behind minimax strategies is that each player uses the same strategy in all the  $k$-component games looking for all the non-dominated vector valued payoffs. This rationale is possible in the domain $\mathcal D_1$, and so we shall consider multicriteria games defined on this domain to analyze multicriteria minimax payoff vectors.

Therefore, {for each strategy $x \in X_m$} of player I let
\begin{equation}\label{eq:v-max}
v-w_1(x) :=v- \max_{y \in {Y_{n}}} (x A(1) y, \ldots, x A(k) y ).
\end{equation}
Now, among those strategies that give maximal payoffs in the above
problems, player I will choose the strategies with the \textit{best}
payoff. (Note that \textit{best} (\textit{worst}) must be understood as finding the
set of minimal (maximal) elements in the componentwise ordering of
$\mathbb{R}^k$.) We now define the set of minimax payoffs in the  $k$-criteria zero-sum game domain $\mathcal D_1$.

\begin{definition}
The set of \textit{extended minimax} payoffs of the game $A \in \mathcal{D}_1$ is given by:
\begin{equation}{\small \label{def:minimax}\mbox{\VMm}_k (A(1),\ldots,A(k)) = v-\min \hspace*{-0.2cm} \bigcup_{x \in X_m} \hspace*{-0.1cm}
v-w_1(x).}
\end{equation}
\end{definition}

According to the above expression the extended minimax payoff vectors {are
the non-dominated vectors} obtained from the solutions to all
the vector valued maximization problems \eqref{eq:v-max} for all
$x\in X_m$.

Clearly, if $B\in \mathbb{R}^{m\times p}$ then $\VMm_1(B)=val(B)$.

After defining the  extended minimax payoff vectors, the concept of  extended minimax strategy naturally follows.
\begin{definition}
An \textit{extended  minimax} strategy of player I is any strategy attaining an extended
minimax payoff vector. (Similarly, one can define  extended \textit{maximin} strategies of
player II.)
\end{definition}

The first result about extended minimax payoff vectors in multicriteria games
was given by Shapley \cite{shapley59}, who provides a simple way for
finding them by solving zero-sum scalar games with payoff matrix
$A(\alpha) =  \sum_{\ell=1}^k \alpha_{\ell} A(\ell)$, a positive linear
combination of matrices $ A({\ell}),\: {\ell}=1,\ldots ,k$.

\begin{theorem}[Adapted from \cite{shapley59}]\label{new}
Let $z^*$ be an extended  minimax value for the multicriteria game with matrix
$A=(A(1),\ldots,A(k))$ then there exists  $\alpha\in X^>
_k:=\{x\in\mathbb{R}^k: \sum_{{\ell}=1}^k x_{\ell}=1,\; x_i>0, \; \forall i\}$
such that $\sum_{{\ell}=1}^k \alpha_{\ell} z^*_{\ell} = \mbox{ val}\,\,
(\sum_{{\ell}=1}^k \alpha_{\ell} A({\ell})).$

\noindent Conversely, let $z^*(\alpha)$ be the minimax value of
$(\sum_{\ell=1}^k \alpha_{\ell} A(\ell))$, then there exists an extended  minimax payoff
vector $z^* \in \VMm_k(A)$, being  $A=(A(1),\ldots,A(k))$,
satisfying $\sum_{{\ell}=1}^k \alpha_{\ell} z^*_{\ell} = z^*(\alpha)$.
\end{theorem}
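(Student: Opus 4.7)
The plan is to prove both implications via Shapley's scalarization argument, combining the classical minimax theorem for scalar zero-sum games with the supporting-hyperplane theorem for Pareto-optimal points of convex sets. Throughout, write $A(\alpha) := \sum_{\ell=1}^k \alpha_\ell A(\ell)$, $S_x := \{(xA(1)y,\ldots,xA(k)y) : y \in Y_n\} \subset \mathbb{R}^k$ (a convex compact set, being the image of $Y_n$ under a linear map), and $\Phi := \bigcup_{x \in X_m} v-\max S_x$, so that $\VMm_k(A) = v-\min \Phi$.

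For the forward direction, given $z^* \in \VMm_k(A)$ I would first extract an $x^* \in X_m$ with $z^* \in v-\max S_{x^*}$ from the definition. Since $z^*$ is a Pareto-maximum of the convex set $S_{x^*}$, the standard supporting-hyperplane theorem yields an $\alpha \in \mathbb{R}^k_+ \setminus \{0\}$ with $\alpha \cdot z^* = \max_{z \in S_{x^*}} \alpha \cdot z = \max_{y \in Y_n} x^* A(\alpha) y$. The target equality $\alpha \cdot z^* = val(A(\alpha))$ then requires two further facts: (i) strict positivity of $\alpha$, and (ii) optimality of $x^*$ for player I in the scalar game $A(\alpha)$, so that $\max_y x^* A(\alpha) y = \min_x \max_y x A(\alpha) y$. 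Both are to be established by contraposition using the Pareto-minimality of $z^*$ in $\Phi$: a vanishing coordinate of $\alpha$, or a strictly better alternative strategy $x'$ with $\max_y x' A(\alpha) y < \alpha \cdot z^*$, would, through a careful construction involving Pareto-maxima of the corresponding $S_{x'}$, yield an element of $\Phi$ Pareto-smaller than $z^*$, contradicting $z^* \in v-\min \Phi$.

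For the converse, given $\alpha \in X^>_k$, the scalar minimax theorem provides a saddle point $(\bar x, \bar y)$ of $A(\alpha)$ with $\bar x A(\alpha) \bar y = val(A(\alpha))$. Setting $\bar z := (\bar x A(1) \bar y, \ldots, \bar x A(k) \bar y)$ gives $\alpha \cdot \bar z = val(A(\alpha))$. I would first verify $\bar z \in v-\max S_{\bar x}$: any $y'$ such that $(\bar x A(1) y', \ldots, \bar x A(k) y')$ strictly Pareto-dominates $\bar z$ would, by $\alpha > 0$, give $\bar x A(\alpha) y' > \alpha \cdot \bar z$, contradicting the optimality of $\bar y$. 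To pass from $\bar z$ to an element $z^* \in v-\min \Phi$ with $\alpha \cdot z^* = val(A(\alpha))$, I would select $z^*$ as a Pareto-minimal element of the compact set of saddle-point payoff vectors of $A(\alpha)$, and then show via a case analysis on the saddle-point inequalities that this $z^*$ remains Pareto-minimal in the whole $\Phi$.

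The hardest part is bridging the convex scalarization on a single slice $S_{x^*}$ and the non-convex lower-envelope structure of $\Phi$. Standard separation yields only a nonnegative support, so promoting $\alpha$ to strict positivity and identifying $x^*$ as an optimal scalar minimizer both require delicate use of the global Pareto-minimality of $z^*$ across the whole family $\{S_x\}_{x \in X_m}$. An analogous difficulty appears in the converse when certifying that the chosen saddle-point payoff is Pareto-minimal in the full $\Phi$, not merely among the saddle-point payoffs.
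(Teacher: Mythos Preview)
The paper does not supply its own proof of this theorem: it is stated as ``Adapted from \cite{shapley59}'' and used as a black box in the subsequent characterizations (Theorems~\ref{t:cMm} and~\ref{t:ccMm}). So there is no in-paper argument to compare your proposal against; any comparison would have to be with Shapley's original 1959 proof.

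Your reconstruction via scalarization and supporting hyperplanes is the right circle of ideas, and you have correctly isolated the two genuine obstacles: upgrading the separating $\alpha$ from nonnegative to strictly positive, and showing that the particular $x^*$ you extract is optimal in the scalar game $A(\alpha)$. Both are real and neither is resolved in your sketch. For the first, note that separation on the single slice $S_{x^*}$ can legitimately return a degenerate $\alpha$ (e.g.\ at an extreme point of the efficient face of $S_{x^*}$), so the fix must come from the global $v$-minimality of $z^*$ over $\Phi$; spelling this out requires a proper-efficiency style argument, not just ``contraposition''. For the second, knowing only that $\alpha$ supports $S_{x^*}$ at $z^*$ gives $\alpha\cdot z^* = \max_y x^*A(\alpha)y$, which is an upper bound on $val(A(\alpha))$; equality forces $x^*$ to be a scalar minimax strategy, and again this hinges on the global minimality across all $x$. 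In the converse direction, your plan to pick $z^*$ Pareto-minimal only among saddle-point payoff vectors and then promote it to Pareto-minimality in all of $\Phi$ has the same gap: an arbitrary $w\in\Phi$ with $w\lneq z^*$ yields $\alpha\cdot w < val(A(\alpha))$, but since $w$ need not maximize $\alpha\cdot(\,\cdot\,)$ over its own slice $S_x$, this is not yet a contradiction. These steps can be completed, but as written your proposal is a plan rather than a proof.
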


\subsection{A characterization of the  set of extended minimax payoff vectors\label{sec: minmax}}
We begin this section by introducing the axioms that will characterize the set of minimax payoff vectors.

Let $\{f_k\}_{k\ge 1}$ be a family of point-to-set maps (correspondences) defined as
\begin{eqnarray*}
 f_k  : & G_k &
\longrightarrow 2^{\mathbb{R}^k} \\
& A = (A(1),\ldots,A(k))& \longrightarrow  f_k(A).
\end{eqnarray*}

The axioms needed are:
\begin{itemize}

\item[A.0] {\bf Objectivity.} For any $z \in \mathbb{R}^k$ $f_k(z )= z$.

\item[A.1] {\bf Monotonicity.} For any $A,\overline{A} \in \mathbb{R}^{m \times (n_1,\ldots,n_k)}$ such that
$\overline{A}\leq A$, $f_k(\overline{A})\subseteq f_k(A) +
\mathbb{R}^k_{-}$

\item[A.2] {\bf Column dominance.}  Let $A_{c({\ell})}$ be the matrix that results from $A$ after adding to
the matrix $A({\ell})$ a new column which is dominated by a convex
combination of its columns. Then $ f_k(A_{c({\ell})}) = f_k(A).$

\item[A.3] {\bf Column elimination.}  Let $A_{-c({\ell})}$ be the matrix that results from $A$ after
removing column $c(\ell)$ from  the matrix $A({\ell})$. Then $f_k(A_{-c({\ell})}) \subseteq f_k(A) +\mathbb{R}^k_{-}$.

\item[A.4] {\bf Row dominance.}  Let $A_r$ be the matrix that results from $A$ after adding
 a new row which is dominated by a convex combination of its rows. Then $ f_k(A_r) =
f_k(A).$

\item[A.5] {\bf Row elimination.}  Let $A_{-r}$ be the matrix that results from $A$ after
removing row $r$. Then $f_k(A_{-r}) \subseteq f_k(A) + \mathbb{R}^k_{+}.$

\item[A.6]. {\bf Consistency.}  For any $k\geq 2$, if
$z \in f_k(A)$ then there exists $0<\alpha<1$, such that
$$(\alpha z_1 + (1-\alpha) z_2, z_3,\dots,z_k) \in
f_{k-1}(M(\alpha A(1) ,  (1-\alpha) A(2) ), A(3),\ldots, A(k))$$
where $M(A(1),\ldots,A(k))$ is a matrix with $m$ rows, labeled
$i=1,\ldots,m$ {and $\prod_{\ell} n_{\ell}$ columns, labeled $c= ( c(1),\ldots,c(k))$
with $c({\ell}) \in \{ 1,\ldots, n_{\ell}\}$} for each ${\ell}=1,\dots,k$. The entry
in row $i$ and column $c=(c(1),\ldots,c(k))$ of $M(A)$ equals
$\sum_{{\ell}=1}^k A({\ell})_{ic({\ell})}$. (See \cite{Voor99}.)

\item[A.7.] {\bf Linear
consistency.}  For any $k\geq 2$ and $A$ such that $n_{\ell} = n \ \forall \ \ell$, if $z \in f_k(A)$ then there
exists $0<\alpha<1$ such that $(\alpha z_1 + (1-\alpha) z_2,
z_3,\dots,z_k) \in f_{k-1}((\alpha A(1)+(1-\alpha) A(2) ),
A(3),\ldots, A(k))$.
\end{itemize}

Objectivity establishes the evaluation in a trivial situation where the game has $k$ criteria and both players have exactly one action available. Monotonicity states that the set of solution payoff vectors should not decrease, in the componentwise order of $\mathbb{R}^k$, when all the payoff matrices weakly increase. Column (row) dominance states that the set of solution payoff vectors should not change if player II (I) can no longer choose an action, in some of the component games, which is worse for him than a combination of some other actions. Column  elimination states that the set of solution payoff vectors can not increase their values, in the componentwise order of $\mathbb{R}^k$, when some action of player II in some of the component games is removed. Row elimination states that when removing an action of player I the new set of solution payoff vectors must be dominated by the original one.
Consistency  states that any solution outcome of a game with a given dimension, in the criteria space, can be converted into a solution outcome of a new game with lower dimension of an amalgamated game `\`a la' Borm et al. \cite{BGPT96}.
Linear consistency states that any solution outcome provided by this correspondence, with a given dimension in the criteria space, can be converted into a solution outcome of this correspondence with lower dimension by considering a convex combination of two of the original criteria.

The next result is a characterization of the set of extended minimax payoff vectors of any general multicriteria zero-sum game.

\begin{theorem}\label{t:cMm}
The set of extended minimax payoff vectors $\VMm_k$ is the largest (w.r.t. inclusion) map on $\mathcal D_1$, the set of multicriteria zero sum games, that satisfies objectivity, monotonicity,  column dominance for $k=1$, row dominance and linear consistency.
\end{theorem}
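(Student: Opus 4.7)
The proof splits into two directions: (i) $\VMm_k$ satisfies the five axioms, and (ii) $\VMm_k$ is the largest (w.r.t.\ inclusion) such map. Direction (i) is a collection of routine verifications. Objectivity holds because a trivial game with $1\times 1$ component matrices has the single payoff vector $z$. Monotonicity holds because if $\overline A\leq A$ componentwise then, for each $x$, the achievable set under $\overline A$ is componentwise dominated by that under $A$, and this propagates through both the $v$-$\max$ in the definition of $v$-$w_1(x)$ and the outer $v$-$\min$. Column dominance for $k=1$ is the classical invariance of val under dominated columns. Row dominance is obtained by observing that a row dominated in every criterion by a convex combination of existing rows is strictly inferior for the minimizer and can be omitted without altering the set of extended minimax outcomes. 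Linear consistency follows from Shapley's theorem (Theorem~\ref{new}): given $z^*\in\VMm_k(A)$ and the weights $\alpha\in X^{>}_k$ produced by that theorem, the merged vector $(\alpha_1'z^*_1+\alpha_2'z^*_2,z^*_3,\ldots,z^*_k)$ with $\alpha_i'=\alpha_i/(\alpha_1+\alpha_2)$ lies in $\VMm_{k-1}$ of the game obtained by replacing $A(1),A(2)$ by $\alpha_1'A(1)+\alpha_2'A(2)$.

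For direction (ii), I would induct on $k$. The base case $k=1$ is the statement that any $f_1$ obeying objectivity, monotonicity, column dominance and row dominance satisfies $f_1(B)\subseteq\{val(B)\}$; this is essentially the classical axiomatization of the value due to Tijs~\cite{Tijs81} and Norde--Voorneveld~\cite{norde04}. A clean proof adjoins to $B$ the rows and columns generated by optimal strategies $x^*,y^*$ (using the dominance axioms to preserve $f_1$), pads by the constant row and column of value $val(B)$, and uses monotonicity to sandwich $f_1(B)$ between two matrices that both reduce by further dominance steps to the $1\times 1$ matrix $[val(B)]$, whose image is $\{val(B)\}$ by objectivity. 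For the inductive step, take $z\in f_k(A)$ with $k\geq 2$. Linear consistency produces $\alpha\in(0,1)$ and a vector $z':=(\alpha z_1+(1-\alpha)z_2,z_3,\ldots,z_k)\in f_{k-1}(A')$ for $A'=(\alpha A(1)+(1-\alpha)A(2),A(3),\ldots,A(k))$, and the inductive hypothesis yields $z'\in\VMm_{k-1}(A')$.

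The remaining step, lifting from $z'\in\VMm_{k-1}(A')$ back to $z\in\VMm_k(A)$, is the main obstacle. Shapley's theorem applied to $A'$ gives $\beta>0$ with $\sum_\ell\beta_\ell z'_\ell=val(\sum_\ell\beta_\ell A'(\ell))$, which unpacks to $\sum_\ell\gamma_\ell z_\ell=val(\sum_\ell\gamma_\ell A(\ell))$ for the strictly positive probability vector $\gamma=(\alpha\beta_1,(1-\alpha)\beta_1,\beta_2,\ldots,\beta_{k-1})$. The converse half of Shapley's theorem only guarantees that \emph{some} $\tilde z\in\VMm_k(A)$ lies on the hyperplane $\{\xi:\gamma\cdot\xi=val(\gamma\cdot A)\}$, not that $z$ itself does. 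To close this gap I would exploit the freedom in the existence statement of linear consistency: by applying it to different pairs of criteria and different choices of $\alpha$, one accumulates several scalar equations of the form $\gamma^{(i)}\cdot z=val(\gamma^{(i)}\cdot A)$; combined with monotonicity and row dominance, these should suffice to certify that $z$ is both achievable (i.e.\ of the form $x^tAy$ for some strategies) and Pareto-minimal in the achievable set, hence $z\in\VMm_k(A)$. Coordinating these existence-type reductions with the structural content of Shapley's theorem is, in my view, the most delicate part of the argument.
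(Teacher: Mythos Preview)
Your overall scheme coincides with the paper's: verify the five axioms for $\VMm_k$, then prove maximality by iterating linear consistency down to $k=1$ and invoking a scalar axiomatization of $val$. Where you cite Tijs and Norde--Voorneveld for the base case, the paper cites Carpente et al.\ \cite[Theorem~2]{car}; the content is the same.

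The step you isolate as ``the main obstacle'' is exactly the step the paper does not argue. After obtaining a single scalar identity $\sum_\ell \alpha_\ell z_\ell = val\big(\sum_\ell \alpha_\ell A(\ell)\big)$ for some $\alpha\in X^>_k$, the paper simply writes ``Then, by Theorem~\ref{new}, $z$ is an extended minimax payoff vector of the multi-matrix $A$'' and stops. As you correctly observe, Theorem~\ref{new} as stated only guarantees that \emph{some} $\tilde z\in\VMm_k(A)$ lies on that hyperplane, not that the given $z$ does; the paper supplies nothing further. So your diagnosis is accurate, and at this juncture the paper's argument is no more complete than your own.

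Your proposed repair, however, does not go through as written. Axiom~A.7 is purely existential in $\alpha$ and is phrased for the fixed pair of criteria $(1,2)$; you do not get to choose the weight, and no symmetry axiom is available to let you merge other pairs, so ``accumulating several scalar equations $\gamma^{(i)}\cdot z=val(\gamma^{(i)}\cdot A)$'' is not something the axiom system delivers. Nor do monotonicity and row dominance by themselves exclude spurious $z$: for the bicriterion game with $A(1)=A(2)$ the $1\times 2$ zero matrix, the vector $(1,-1)$ satisfies $\tfrac12 z_1+\tfrac12 z_2=0=val(0)$, column dominance is unavailable here (it is assumed only for $k=1$), row dominance is vacuous, and objectivity does not apply since the game is not $1\times 1$. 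In short, you and the paper follow the same route to the same endpoint; the gap you flag is genuine in both treatments, and your sketch of a fix is not yet a proof.
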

\noindent{\bf Proof}: The proof is similar to Theorem \ref{main},
 but using Theorem \ref{new} instead of Theorem \ref{multiples}.
First of all, we check that $\VMm _k$ satisfies the properties.

\begin{description}

\item [Objectivity] It is clear that $\VMm _k$ satisfies A.0.
\item [Monotonicity]
Since $x\ge 0$ then $xA({\ell})\ge x\bar{A}({\ell})$, for all ${\ell}=1,\ldots,k$.
Hence, for all  $y\in Y_{n}^2$, $( xA(1)y,\ldots, xA(k)y)\geq ( x\overline{A}(1)y,\ldots,
x\overline{A}(k)y)$, and the property
follows.

\item [Column dominance for $\mathbf{k=1}$] It is clear that $\VMm _1$ satisfies column dominance for $k=1$ since $\VMm_1(B)=val(B)$ and it is known that the value function, $val(\cdot)$, of a matrix game satisfies this property (see e.g. \cite{car}).
%

\item [Row dominance] Let $ A_r = \left(  \begin{array}{ccc}
   & A&  \\
  b_1 & \ldots & b_k
\end{array}  \right) $, $ b_{\ell}=(b_{\ell}^1,\ldots,b_{\ell}^{n_{\ell}}) \in \mathbb{R}^{n_{\ell}},\,
{\ell}=1,\ldots,k$  such that $b_{\ell} = \sum_{i=1}^m \alpha_i A^{i\cdot}({\ell})$
with  $\sum_{i=1}^m \alpha_i=1$ and $\alpha_i\ge 0$. Take
$\overline{x}=(\overline{x}_1,\ldots,\overline{x}_m,\overline{x}_{m+1})
\in X^1_{m+1}$, then for any $y\in Y_n$
$$\overline{x} \left(\begin{array}{c} A({\ell})\\
b_{\ell}
\end{array} \right) y= \sum_{i=1}^m \sum_{j\in I_{\ell}} (\overline{x}_i+\alpha_i \overline{x}_{m+1}
) a_{ij}({\ell}) y_j= \hat{x} A({\ell}) y,\; \forall \; {\ell}=1,\ldots,k,$$ being
$\hat{x}=((\overline{x}_1+\alpha_1 \overline{x}_{m+1} ), \ldots,
(\overline{x}_n+\alpha_n \overline{x}_{m+1} ))\in X_m$. Hence,
$\VMm_ k (A_r)=\VMm_k(A)$.

\item [Consistency] Assume A.7 is not satisfied. Therefore, there
exists  $z=(z_1,\dots,z_k) \in \VMm_k(A)$ such that for all
$0<\alpha<1$, $(\alpha z_1+ (1-\alpha) z_2,z_3,\ldots,z_k) \notin
\VMm_{k-1}([\alpha A(1)+ (1-\alpha) A(2)], A(3),\ldots,A(k)).$ Hence
by Theorem \ref{new}, it does not exist
$(\beta_1,\beta_3,\ldots,\beta_{k})\in \G0_{k-1}$ such that
\begin{eqnarray*} \beta_1\alpha z_1 + \beta_1 (1-\alpha) z_2+ \beta_3z_3+\ldots+ \beta_k z_k
\in  & val(\beta_1\alpha A(1)+\beta_1 (1-\alpha) A(2)+ \\
 & \beta_3 A(3)+\ldots+ \beta_k A(k)).
\end{eqnarray*}

 However, this contradicts that $z$ is a minimax payoff vector because
  by Theorem \ref{new}, there must exist $(\lambda_1,\ldots,\lambda_k)\in X^>_k$,
 such that $$\sum_{{\ell}=1}^k \lambda_{\ell} z_{\ell} = val ( \lambda_1 A(1)+\ldots+\lambda_k A(k)).$$

 The above contradiction proves that $\VMm_k$ satisfies A.7.
\end{description}

To finish the proof, it is enough to show that if $\{g_k\}_{k\ge 1}$ is an arbitrary family  of point-to-set maps that satisfy A.0, A.1, A.2, A.4 for $k=1$, and A.7, then for any general multicriteria game given by the matrix $A=(A(\ell))_{\ell=1\ldots k}$ with $A(\ell) \in {\mathbb{R}^{m\times n}}$ we get $g_k(A) \subseteq \VMm _k(A) \quad
\forall\; k\geq 1.$

 For $k=1$, the axioms A.0, A.1, A.2 and A.4  characterize the $val(.)$
 function of a matrix game (see Carpente et al.\cite[Theorem 2]{car}). Therefore, $g_1(.)= val(.) = \VMm _1(.).$

 Let $z \in g_k(A), k\ge 2$. Apply A.7 $k$-times to conclude that there {exists
 $\alpha\in \mathbb{R}^k$, $\sum_{{\ell}=1}^k \alpha_{\ell}=1,\; \alpha_{\ell}>0$,} such that
 $$\sum_{{\ell}=1}^k \alpha_{\ell} z_{\ell} = val ( \sum_{{\ell}=1}^k \alpha_{\ell} { A({\ell})}).$$
Then, by Theorem \ref{new}, $z$ is an extended  minimax payoff vector of the
multi-matrix $A$. Hence, $$ g_k(A) \subseteq \VMm _k(A) \quad
\forall A,\; k\geq 1.$$

\hfill$\Box$

{We would like to remark that in our characterization we use maximality with respect to inclusion. This property is rather important when dealing with set-valued functions (correspondences) since it ensures that this is the largest  object (solution concept) satisfying the required game theoretic properties. This approach is not new and  has been already used among others by  Gerard-Varet and Zamir  \cite{GVZ} for characterizing the \textit{`Reasonable set of outcomes'} and Calleja et al. \cite{CRT08} for the \textit{`Aggregate-monotonic core'}.
}

{
The above theorem also implies that $\VMm_k$ is the largest map on $\mathcal{D}_1$ that satisfies Linear Consistency (A7) and that coincides with the value function on standard single criterion matrix games.
Another characterization of extended minimax payoff vectors using a different set of properties is possible. The rationale is to alternatively characterize the $val(\cdot)$ function and then to
apply the consistency construction. This is possible based on
Carpente et al. \cite[Theorem 3]{car}.}

{\begin{theorem} \label{t:ccMm}
The set of extended minimax payoff vectors $VMm_k$ is the largest (w.r.t. inclusion) map on $\mathcal D_1$ that satisfies  objectivity,   column dominance, row
dominance, column elimination, row elimination and linear consistency.
\end{theorem}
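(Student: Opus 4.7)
The plan is to follow the two-step template from the proof of Theorem \ref{t:cMm}: first verify that $\VMm_k$ satisfies all six axioms, then show maximality by proving that any family $\{g_k\}_{k \geq 1}$ satisfying the same axioms is pointwise contained in $\{\VMm_k\}$. The new ingredients here, relative to Theorem \ref{t:cMm}, are column elimination (A.3) and row elimination (A.5), together with column dominance now required for arbitrary $k$ rather than just $k=1$; monotonicity (A.1) is no longer invoked.

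For the verification step, objectivity, row dominance, and linear consistency were already established in the proof of Theorem \ref{t:cMm} and carry over verbatim. The three remaining properties follow from direct observations on how the expression $x A(\ell) y(\ell)$ behaves under strategy-set modifications of the $\ell$-th component game. If a column added to $A(\ell)$ is convex-dominated by the existing columns, no best response of player II in that scenario assigns it positive weight, so the vector-maximum set in \eqref{eq:v-max} is unchanged and column dominance holds for every $k$. If instead a column of $A(\ell)$ is removed, player II's effective strategy set in scenario $\ell$ shrinks, so for each $x$ the $\ell$-th coordinate of the attainable payoff can only decrease while the other coordinates are unaffected; hence the reduced payoff set lies in $\VMm_k(A) + \mathbb{R}^k_{-}$. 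Symmetrically, removing a row shrinks the minimizer's strategy set, placing the new payoffs in $\VMm_k(A) + \mathbb{R}^k_{+}$.

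For the maximality step, the crucial substitution is at the base case $k=1$: Carpente et al. \cite[Theorem 3]{car} states precisely that objectivity, column dominance, column elimination, row dominance, and row elimination jointly characterize the value function on scalar zero-sum matrix games. Hence $g_1(B) = val(B) = \VMm_1(B)$ for every matrix $B$, and the rest of the argument mirrors the closing lines of the proof of Theorem \ref{t:cMm}. Given $z \in g_k(A)$ with $k \geq 2$, iterate linear consistency A.7 exactly $k-1$ times, extracting a strictly positive coefficient $\alpha_i \in (0,1)$ at each step. The aggregated weight vector $\lambda \in X^>_k$ then satisfies $\sum_\ell \lambda_\ell z_\ell = val(\sum_\ell \lambda_\ell A(\ell))$, and the converse direction of Theorem \ref{new} places $z$ in $\VMm_k(A)$, yielding the required inclusion $g_k(A) \subseteq \VMm_k(A)$.

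The main obstacle is the treatment of the base case: the proof needs the value function to be pinned down by the five proposed axioms without appealing to monotonicity, and this is exactly the content of Carpente et al. \cite[Theorem 3]{car}. Once that substitute is available, the inductive lift via linear consistency reproduces the reasoning of Theorem \ref{t:cMm} unchanged, and the strict positivity of the accumulated weights (products of numbers in $(0,1)$) ensures the hypotheses of Theorem \ref{new} are met.
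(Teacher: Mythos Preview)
Your proposal is correct and follows essentially the same approach as the paper, which merely states that the proof ``runs similarly to that of Theorem \ref{t:cMm} but using \cite[Theorem 3]{car} instead of \cite[Theorem 2]{car}.'' You have fleshed out exactly this substitution: the verification step carries over from Theorem \ref{t:cMm} augmented with brief checks of A.2, A.3, and A.5, and the maximality step replaces the base-case appeal to \cite[Theorem 2]{car} with \cite[Theorem 3]{car} before iterating linear consistency and invoking Theorem \ref{new}.
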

The proof runs similarly to that of Theorem \ref{t:cMm} but using
\cite[Theorem 3]{car} instead of \cite[Theorem 2]{car}.}

%
%

\section{POSS in $\mathcal{D}_2$}\label{sec:def_POSS}

Apart from minimax values, our interest also goes into another
solution concept of multicriteria zero-sum matrix games: POSS. This
concept is independent of the notion of equilibrium, so that the
opponents are only taken into account to establish the security
levels for one's own payoff. Therefore this notion does not require to play the same strategy in all the $k$-component games, and thus is defined in the class $\mathcal{D}_2$.

\begin{definition}
Every strategy $x \in X_m $  defines security levels $v^{\ell}_I(x)$ as
the payoffs with respect to each criterion,
 when II  bets to maximize   the criteria \cite{ghose91}. Hence
\begin{eqnarray}
v^{\ell}_I(x) =  \max _{y\in Y_{n_{\ell}}} x A(\ell) y= \max _{y\in Y_{n_{\ell}}} v_{\ell}(x,y), \qquad \ell=1,\ldots ,k,
\label{seg1}
\end{eqnarray}
and the security levels are $k$-tuples denoted by
\begin{eqnarray}
v_I(x) =(v^1_I(x),\ldots ,v^k_I(x)).
\end{eqnarray}
\end{definition}

Whenever the game we are referring to is not obvious we will use the notation $v_I(x,A)$ to specify such matrix $A$.
It must be noted that for a given strategy $x$ for player I, the
security levels $v_I(x)$ might be obtained by different strategies
of  player II .
In \cite{ghose89} the concept of Pareto optimal security strategy (POSS) is defined as follows.
\begin{definition}
A strategy $x^*\in X_m$ is a
Pareto-optimal security strategy for I if and only if there is no $ x\in
X_m$ such that $v_I(x^*) \gneq v_I(x)$. Similarly, one can define \PO for II.

The set of Pareto-optimal security level
vectors is the set of payoffs that can be attained by POSS, and will
be denoted by $\mbox{VPOSS}_k(A(1),\ldots,A(k))$, thus

\begin{eqnarray}
VPOSS_k(A(1),\ldots,A(k))& =& \{ z \in \mathbb{R}^k: z=v_I(x) \mbox{ for some } x\in X_m \mbox{
and }  \nonumber \\
& & \nexists y \in X_m \mbox{ such that } v_I(y)\lneq v_I(x) \}.
\end{eqnarray}
\end{definition}
Note that depending on $k$, the corresponding set is defined on a
different framework space $\mathbb{R}^k,\; k\ge 1.$

The following theorem provides a way to determine all \PO and their security level vectors.
\begin{theorem}(\cite[Theorem 3.1]{fernandez96})   \label{multiple} A strategy $x^*\in X_m $ is a POSS
and $v^*=(v_1^*,\ldots ,v_k^*)$ is its security level vector if and only if
$(v^*,x^*)$ is an efficient {solution to} the problem
$$\begin{array}{clc}
\min & v_1,\ldots ,v_k, & \\
\mbox{s.t.} & xA({\ell})\le (v_{\ell},\ldots ,v_{\ell}), & {\ell}=1,\ldots ,k,\\
& \sum _{i=1}^n x_i=1, &
\\    & \qquad x\ge 0, &v\in \mathbb{R}^k.
\end{array} $$
\end{theorem}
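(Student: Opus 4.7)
The plan is to translate the constraints of the multiobjective program into the language of security levels and then argue by contradiction in both directions. The key observation is that the constraint $xA(\ell)\le (v_\ell,\ldots,v_\ell)$ is equivalent, via maximization over pure columns (which suffices for the maximum over mixed strategies $Y_{n_\ell}$), to the inequality $v^\ell_I(x)\le v_\ell$. Thus the feasible region of the program is exactly the epigraph of the security level map $v_I(\cdot)$ over $X_m$, i.e.\ the set of pairs $(v,x)$ with $x\in X_m$ and $v\ge v_I(x)$ componentwise.

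For the ``only if'' direction, I would start from a POSS $x^*$ with security vector $v^*=v_I(x^*)$ and show that $(v^*,x^*)$ is efficient. Feasibility is immediate. If it were not efficient, there would exist feasible $(v,x)$ with $v\lneq v^*$; combining with $v_I(x)\le v$ gives $v_I(x)\lneq v_I(x^*)$, directly contradicting the POSS property of $x^*$.

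For the ``if'' direction, assume $(v^*,x^*)$ is efficient. I would first show that necessarily $v^*=v_I(x^*)$: if some coordinate satisfied $v^*_\ell>v^\ell_I(x^*)$, then lowering $v^*_\ell$ to $v^\ell_I(x^*)$ keeps feasibility and strictly improves in coordinate $\ell$, contradicting efficiency; hence $v^*_\ell=v^\ell_I(x^*)$ for all $\ell$. Then to see that $x^*$ is POSS, suppose to the contrary there exists $x\in X_m$ with $v_I(x)\lneq v_I(x^*)=v^*$. Then $(v_I(x),x)$ is feasible and dominates $(v^*,x^*)$, again contradicting efficiency.

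No step looks genuinely hard; the only care needed is in the first reduction, namely verifying that the linear inequality $xA(\ell)\le(v_\ell,\ldots,v_\ell)$ (one scalar inequality per pure column of player II) really captures $\max_{y\in Y_{n_\ell}} xA(\ell)y\le v_\ell$. This holds because the maximum of a linear function over the simplex $Y_{n_\ell}$ is attained at a vertex, so dominance against all pure columns is equivalent to dominance against all mixed strategies. Once this equivalence is in hand, both implications reduce to one-line contradiction arguments against the definitions of POSS and of Pareto efficiency.
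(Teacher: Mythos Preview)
Your argument is correct and is the natural proof. Note, however, that the paper does not supply its own proof of this theorem: it is quoted as \cite[Theorem 3.1]{fernandez96} and stated without proof, so there is nothing to compare against here beyond confirming that your approach is the standard one. The one step worth making fully explicit (you gloss it slightly) is that from $v_I(x)\le v$ and $v\lneq v^*$ one gets $v_I(x)\le v^*$ and, since equality $v_I(x)=v^*$ together with $v_I(x)\le v\le v^*$ would force $v=v^*$, in fact $v_I(x)\lneq v^*$; this is what yields the contradiction with the POSS property.
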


Equivalently, Ghose  (\cite[Theorem 3.3]{ghose91}) and Voorneveld
(\cite[Theorem 3.1]{Voor99}) characterize POSS strategies as minimax
strategies of  particular classes of scalar games.

\begin{theorem}  \label{multiples} (\cite[Theorem 3.3]{ghose91}) A strategy $x^*\in  X_m$ is a POSS
for player I in the multicriteria game  if and only if there exists $ \alpha
\in X^>_k=\{x\in\mathbb{R}^k: \sum_{{\ell}=1}^k x_{\ell}=1,\; x_i>0, \;
\forall i\}$, such that
$$  \max_{(y_1,\dots,y_k)\in \prod_{{\ell}=1}^k\Gd_{n_{\ell}}}\;\;\sum_{{\ell}=1}^k x^* \alpha_{\ell} A({\ell})y_{\ell} = \min_{x \in X_m}
\;\; \max_{(y_1,\dots,y_k)\in \prod_{{\ell}=1}^k\Gd_{n_{\ell}}}\;\;
\sum_{{\ell}=1}^k x \alpha_{\ell} A({\ell})y_{\ell}.$$
\end{theorem}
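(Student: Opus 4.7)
The plan is to reduce the claim to a classical scalarization theorem for linear multiobjective optimization, which I then combine with the LP-based characterization of POSS given in Theorem \ref{multiple}. First, I simplify the inner maximization: since the $y_{\ell}$ are chosen independently for each $\ell$ and $\alpha_{\ell} > 0$,
\[
\max_{(y_1,\dots,y_k)\in \prod_{\ell} Y_{n_{\ell}}} \sum_{\ell} x\,\alpha_{\ell} A({\ell})y_{\ell} \;=\; \sum_{\ell} \alpha_{\ell} \max_{y_{\ell}\in Y_{n_{\ell}}} x A({\ell}) y_{\ell} \;=\; \sum_{\ell} \alpha_{\ell} v^{\ell}_I(x),
\]
so the statement reduces to: $x^*$ is POSS for player I iff there exists $\alpha \in X^>_k$ such that $x^*$ minimizes $\sum_{\ell} \alpha_{\ell} v^{\ell}_I(x)$ over $X_m$.

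For the ``only if'' direction, I invoke Theorem \ref{multiple}, which identifies each POSS pair $(x^*, v^*)$ with an efficient solution of a linear vector-valued LP, and then apply the weighted-sum characterization of efficient solutions of linear MOPs (Isermann's theorem): every efficient solution of a linear MOP is optimal for some strictly positive weighted combination of its objectives, with weights normalized to lie in $X^>_k$. Applied to the LP in Theorem \ref{multiple}, this produces an $\alpha \in X^>_k$ such that $(x^*, v^*)$ minimizes $\sum_{\ell} \alpha_{\ell} v_{\ell}$. Because the constraints $xA({\ell}) \le (v_{\ell},\dots,v_{\ell})$ must be tight at the optimum (otherwise one could decrease some $v_{\ell}$ and strictly improve the weighted sum), we get $v^*_{\ell} = v^{\ell}_I(x^*)$ and $x^*$ minimizes $\sum_{\ell} \alpha_{\ell} v^{\ell}_I(x)$ over $X_m$, as required.

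The converse is a short contradiction argument. Suppose $x^*$ realizes the weighted minimax for some $\alpha \in X^>_k$ but is not POSS: there exists $\bar{x} \in X_m$ with $v_I(\bar{x}) \lneq v_I(x^*)$. Strict positivity of $\alpha$ then yields $\sum_{\ell} \alpha_{\ell} v^{\ell}_I(\bar{x}) < \sum_{\ell} \alpha_{\ell} v^{\ell}_I(x^*)$, contradicting the minimax optimality of $x^*$.

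The main obstacle I expect is the scalarization step with strictly positive weights: for general MOPs efficiency only guarantees the existence of nonnegative weights, and strict positivity typically requires proper efficiency. The cleanest route is to cite Isermann's theorem for linear vector optimization, which delivers strict positivity thanks to the polyhedral structure of the LP in Theorem \ref{multiple}; alternatively, one can dualize that LP and extract the positive weights directly via LP duality and complementary slackness on the Pareto subproblem.
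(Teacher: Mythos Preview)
The paper does not supply its own proof of this theorem: it is quoted verbatim as \cite[Theorem 3.3]{ghose91} and used as a black box in the subsequent characterizations. So there is no ``paper's proof'' to compare against.

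Your argument is correct. The reduction of the inner maximum to $\sum_{\ell}\alpha_{\ell}v^{\ell}_I(x)$ is immediate and legitimate (separability in the $y_{\ell}$ and $\alpha_{\ell}>0$). The ``if'' direction is exactly the standard one-line argument: a strictly positive weighted minimizer cannot be Pareto-dominated. For the ``only if'' direction, funneling through Theorem~\ref{multiple} and then invoking Isermann's linear scalarization theorem is the right move and is precisely what handles the obstacle you flag: for \emph{linear} multiobjective programs every efficient point is the optimum of a \emph{strictly} positive weighted sum, so $\alpha\in X^>_k$ is guaranteed. Your tightness remark (that at a weighted optimum with $\alpha_{\ell}>0$ one must have $v^*_{\ell}=\max_j (x^*A(\ell))^{\cdot j}=v^{\ell}_I(x^*)$) is also correct and is what translates the LP optimum back into the security-level formulation.

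In short: the paper offers no proof here, and yours is a clean, standard derivation via linear scalarization that matches the spirit of the original Ghose argument.
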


\begin{theorem} \label{t:Voorn} (\cite[Theorem 3.1]{Voor99})
A strategy $x^*\in X_m$ is a \PO for player I in the multicriteria
matrix game $A=(A(1),\ldots,A(k))$ if and only if there exists a
vector  $\alpha \in X^>_k$ such that $x^*$ is a minimax strategy in
the scalar matrix game  $M(\alpha_1A(1),\ldots,\alpha_kA(k))$ being
 $M(\alpha_1A(1),\ldots,\alpha_kA(k))$ a matrix with $m$ rows, labeled
$i=1,\ldots,m$ and $n^k$ columns, labeled $c= ( c(1),\ldots,c(k))$
with $c({\ell}) \in \{ 1,\ldots, n\}$ for each ${\ell}=1,\dots,k$. The entry
in row $i$ and column $c=(c(1),\ldots,c(k))$ of $M(\alpha A)$ equals
 $\mathbf{\sum_{{\ell}=1}^k \alpha_{\ell}A({\ell})_{ic({\ell})}}$.
\end{theorem}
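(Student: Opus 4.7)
The plan is to deduce Theorem \ref{t:Voorn} from Ghose's characterization in Theorem \ref{multiples}. Both statements quantify over the existence of some $\alpha \in X^>_k$ under which $x^*$ solves an auxiliary scalar problem, so it suffices to show that for every fixed $\alpha \in X^>_k$ the set of minimax strategies of the amalgamated scalar game $M(\alpha_1A(1),\ldots,\alpha_kA(k))$ coincides with the set of $x^*$ satisfying the equation in Theorem \ref{multiples}.

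The core identity I would establish is
\begin{equation*}
\max_{y' \in Y_{n^k}} x \, M(\alpha_1 A(1),\ldots,\alpha_k A(k))\, y' \;=\; \max_{(y_1,\ldots,y_k) \in \prod_{\ell}Y_{n_{\ell}}} \sum_{\ell=1}^k x\,\alpha_{\ell} A(\ell)\, y_{\ell},
\end{equation*}
valid for every $x \in X_m$. For the $\leq$ direction, $x\, M(\alpha A)\, y'$ is linear in $y'$, so the maximum is attained at a pure strategy, i.e., at some tuple $c^* = (c^*(1),\ldots,c^*(k))$. Expanding the definition of the entries of $M$ yields $\sum_{\ell} \alpha_{\ell}(xA(\ell))_{c^*(\ell)}$, a sum of quantities that depend on disjoint coordinates of $c^*$; hence each $c^*(\ell)$ may be chosen independently to maximize its own summand, giving the right-hand side. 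For the $\geq$ direction, given independent marginals $y_1,\ldots,y_k$, the product distribution $y'_c := \prod_{\ell=1}^k y_{\ell}(c(\ell))$ is a valid element of $Y_{n^k}$ whose expected payoff factorizes into exactly $\sum_{\ell} x\,\alpha_{\ell} A(\ell)\, y_{\ell}$.

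With this identity in hand, minimizing both sides over $x \in X_m$ shows that $x^*$ is a minimax strategy of $M(\alpha_1A(1),\ldots,\alpha_kA(k))$ if and only if it attains equality in the displayed equation of Theorem \ref{multiples} for the same $\alpha$. Ranging over $\alpha \in X^>_k$ and invoking Theorem \ref{multiples} then yields the desired equivalence between being a POSS for player I and being a minimax strategy of some amalgamated game $M(\alpha_1A(1),\ldots,\alpha_kA(k))$.

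The main obstacle is the $\geq$ direction of the identity: one has to verify that the product distribution does indeed factor the expected payoff cleanly, since the entries of $M$ mix all $k$ matrices through the same tuple index $c$, and the linearity that decouples the marginals is only available after carefully separating the sum $\sum_i x_i \sum_{c} y'_c \sum_{\ell} \alpha_{\ell} A(\ell)_{i c(\ell)}$ along the coordinates of $c$. Once this notationally heavy but routine combinatorial step is settled, the rest of the argument is a direct reduction to Theorem \ref{multiples}.
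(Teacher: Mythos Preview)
The paper does not supply its own proof of this statement; Theorem~\ref{t:Voorn} is quoted from \cite[Theorem~3.1]{Voor99} and used as a black box (notably in the consistency step of Theorem~\ref{main}). So there is no in-paper argument to compare against.

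That said, your derivation is correct and is the natural route. The key identity you isolate,
\[
\max_{y'\in Y_{n^k}} x\,M(\alpha_1A(1),\ldots,\alpha_kA(k))\,y' \;=\; \sum_{\ell=1}^k \alpha_{\ell}\,\max_{j}\,(xA(\ell))_j \;=\; \max_{(y_1,\ldots,y_k)\in\prod_{\ell}Y_{n_\ell}}\sum_{\ell=1}^k x\,\alpha_\ell A(\ell)\,y_\ell,
\]
follows exactly as you indicate: the left maximum is attained at a pure column $c^*=(c^*(1),\ldots,c^*(k))$, and since the objective $\sum_\ell \alpha_\ell (xA(\ell))_{c(\ell)}$ is separable in the coordinates of $c$, each $c^*(\ell)$ can be optimized independently. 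Your product-measure construction for the reverse inequality is also fine (and indeed the same factorization device appears in the paper's proof of Theorem~\ref{th: transf_fed}). Once this identity is in hand, the equivalence of Voorneveld's minimax condition and Ghose's condition in Theorem~\ref{multiples} is immediate for every fixed $\alpha$, and existentially quantifying over $\alpha\in X^>_k$ finishes the argument.
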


The reader may note that in the scalar case,  $\mbox{VPOSS}_1(B)= \VMm_1(B)=val(B)$,
and theorems  \ref{multiple},  \ref{multiples} and \ref{t:Voorn}
coincide.

\subsection{A characterization of Pareto-optimal security payoffs  \label{sec: poss}}

In this section, we characterize the set of
Pareto-optimal security payoffs  defined on a general multicriteria
two--person zero-sum game in $\mathcal D_2$. Thus, we try to identify a map (solution concept)
that associates to any array of $k$ matrices {with the same number of rows} a set of vectors in $\mathbb{R}^k, k\ge 1.$


Using the above set of properties, we can obtain a
characterization of the entire set of Pareto-optimal security payoff vectors as the maximal (in the inclusion sense) point-to-set map that satisfies $A.0$, $A.1$, $A.2$, $A.4$ and $A.6$.

\begin{theorem}\label{main} The set of Pareto-optimal security level vectors $VPOSS_k$ is the largest (w.r.t. inclusion) map on $\mathcal D_2$, that satisfies objectivity, monotonicity,  column dominance,
row dominance and consistency. 

\end{theorem}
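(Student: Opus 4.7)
The plan is to mirror the structure of the proof of Theorem \ref{t:cMm}, but replace Shapley's scalarization result (Theorem \ref{new}) with Voorneveld's amalgamation characterization (Theorem \ref{t:Voorn}) and replace linear consistency (A.7) by the amalgamated consistency axiom (A.6). The argument splits into two parts: (i) show that the correspondence $VPOSS_k$ satisfies A.0, A.1, A.2, A.4 and A.6, and (ii) show that any family $\{g_k\}$ satisfying these axioms must be pointwise contained in $\{VPOSS_k\}$. Maximality then follows.

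For part (i), objectivity is immediate from the definition. Monotonicity follows because for any $x\in X_m$ and $y\in Y_{n_\ell}$, $\bar A \le A$ and $x\ge 0$ imply $x\bar A(\ell) y \le x A(\ell) y$, so the security levels $v^\ell_I(x,\bar A)$ are componentwise no larger than $v^\ell_I(x,A)$; since the Pareto frontier of a dominated set lies componentwise below the Pareto frontier of the larger set (modulo $\mathbb{R}^k_-$), the inclusion holds. Column dominance holds because the max in \eqref{seg1} defining $v^\ell_I(x)$ is unchanged when adding a column dominated by a convex combination of existing columns (an optimal $y$ can be constructed using only the original columns). Row dominance holds because any strategy that uses the dominating new row can be replaced by the convex combination of original rows, producing weakly smaller security levels in every coordinate; hence the Pareto frontier of security vectors is preserved. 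These mimic the analogous steps in the proof of Theorem \ref{t:cMm}.

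The heart of the argument is consistency (A.6). Given $z \in VPOSS_k(A)$, Theorem \ref{t:Voorn} provides $\alpha\in X^>_k$ and a POSS $x^*$ such that $x^*$ is a minimax strategy of the scalar amalgamated game $M(\alpha_1 A(1),\dots,\alpha_k A(k))$, with $\sum_\ell \alpha_\ell z_\ell$ being its value. Set $\alpha := \alpha_1/(\alpha_1+\alpha_2)\in (0,1)$ and let $\tilde\alpha = (\alpha_1+\alpha_2,\alpha_3,\dots,\alpha_k)\in X^>_{k-1}$. The key observation is that the amalgamated matrix $M(\alpha_1 A(1),\alpha_2 A(2),\alpha_3 A(3),\dots,\alpha_k A(k))$ coincides (up to relabeling the composite columns) with $M\bigl(\tilde\alpha_1\, M(\alpha A(1), (1-\alpha)A(2)),\, \tilde\alpha_2 A(3),\dots,\tilde\alpha_{k-1}A(k)\bigr)$, because amalgamation is associative on the product of column index sets and the entries add linearly. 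Applying Theorem \ref{t:Voorn} in the reverse direction to the $(k-1)$-criteria game $(M(\alpha A(1),(1-\alpha)A(2)), A(3),\dots,A(k))$ with weights $\tilde\alpha$ yields that $(\alpha z_1+(1-\alpha)z_2, z_3,\dots,z_k)$ is a POSS payoff vector in this reduced game, as required. I expect this step to be the main obstacle, because one must verify carefully that the associativity/linearity of amalgamation makes the weight vectors $\alpha$ and $\tilde\alpha$ consistent and that Pareto-optimality at the reduced level is preserved (not just feasibility).

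For part (ii), let $\{g_k\}$ satisfy A.0, A.1, A.2, A.4 and A.6. For $k=1$, the axioms A.0, A.1, A.2 and A.4 characterize $val(\cdot)$ by \cite[Theorem 2]{car}, and $VPOSS_1 = val$, so $g_1 \subseteq VPOSS_1$. For $k\ge 2$, take $z\in g_k(A)$ and iterate A.6 a total of $k-1$ times: at each step we absorb two coordinates into one using some $\alpha^{(j)}\in(0,1)$, replacing the first two matrices by their amalgamation with those weights. After $k-1$ steps we obtain a single scalar game whose matrix is an amalgamation $M(\alpha_1 A(1),\dots,\alpha_k A(k))$ for suitable $\alpha\in X^>_k$ (the weights $\alpha_\ell$ are the products of the $\alpha^{(j)}$ and $(1-\alpha^{(j)})$ that track back to coordinate $\ell$), and the corresponding scalar in $g_1$ equals $\sum_\ell \alpha_\ell z_\ell$. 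Since $g_1 = val$, this scalar is the value of the amalgamated game, and Theorem \ref{t:Voorn} then identifies $z$ as an element of $VPOSS_k(A)$. This yields $g_k(A) \subseteq VPOSS_k(A)$ and completes the proof.
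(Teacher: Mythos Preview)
Your proposal is correct and follows essentially the same route as the paper: verify A.0--A.4 by direct computation on security levels, establish A.6 via Voorneveld's amalgamation characterization (Theorem~\ref{t:Voorn}), and prove maximality by iterating A.6 down to the scalar case where Carpente et al.\ \cite[Theorem 2]{car} pins $g_1=val$. The only cosmetic difference is that the paper verifies A.6 by contradiction (invoking Theorems~\ref{multiples} and~\ref{t:Voorn}) whereas you give the direct construction and make the associativity of amalgamation and the composition of the weights $\alpha^{(j)}$ explicit; both arguments encode the same identity $M(\alpha_1 A(1),\dots,\alpha_k A(k))=M\bigl((\alpha_1{+}\alpha_2)M(\tfrac{\alpha_1}{\alpha_1+\alpha_2}A(1),\tfrac{\alpha_2}{\alpha_1+\alpha_2}A(2)),\alpha_3 A(3),\dots,\alpha_k A(k)\bigr)$.
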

\noindent{\bf Proof}: First of all, we check that $VPOSS_k$
satisfies these properties. 

\begin{description}

\item [A.0 Objectivity] It is clear that $VPOSS_k$ satisfies A.0.
\item [A.1 Monotonicity] The security level vectors for the strategy $x$ with respect to $A$
$$
\begin{array}{rl}
v_I(x, A) & = (\max_{y \in Y_{n_1}} x A(1) y,..., \max_{y \in Y_{n_k}} x A(k)) \\
 & = (\max_{y \in ext\{Y_{n_1}\}} x A(1) y,..., \max_{y \in ext\{Y_{n_k}\}} x A(k)) \\ & =(\max\{xA(1)\},\ldots,\max\{xA(k)\}), \\ \end{array}
 $$ where $\max\{xA(\ell)\}$ denotes the maximum component of the vector $xA(\ell)$. Note that this is true because $ext\{Y_{n_\ell}\}$ (the set of extreme points of $Y_{n_\ell}$) consists of the vectors whose $i^{th}$ component is one and the rest is zero, for $i=1,...,n_\ell$.
Analogously,
$$v_I(x, \overline{A}) = (\max \{x\overline{A}(1)\},\ldots,\max
\{x\overline{A}(k)\}).$$

Now, since $x\ge 0$ then $xA({\ell})\ge x\bar{A}({\ell})$, for all
${\ell}=1,\ldots,k$. Hence, $v_I(x, A)\geq v_I(x, \overline{A})$, for
each $x$ and the conclusion follows.

\item [A.2 Column dominance]  Let $A_{c({\ell})}({\ell})$ be the matrix that results from adding to
$A({\ell})$ a new column $H$ that is dominated by a convex combination of
the columns of $A({\ell})$, i.e., $A_{c({\ell})}({\ell})=(A^{\cdot
1}({\ell}),\ldots,A^{\cdot n_{\ell}}(k),H)$ where $H$ is such that there exists
$\alpha_1,\ldots,\alpha_{n_{\ell}} \geq 0, \; \sum_{j=1}^{n_{\ell}}
\alpha_j=1$ satisfying $\sum_{{\ell}=1}^k \alpha_j A^{.j}({\ell}) \geq H. $ It
is
clear that  $A_{c({\ell})}(s)=A(s)$,  $s\neq {\ell}$.

By construction,
$$\hspace*{-1cm}v_I^{\ell}(x, A_{c({\ell})}) = \max (
xA^{\cdot 1}_{c({\ell})}({\ell}),\ldots,xA^{\cdot n_{\ell}}_{c({\ell})}({\ell}), xH)=\max (
xA^{\cdot 1}({\ell}),\ldots,xA^{\cdot n_{\ell}}({\ell}))=v_I^{\ell}(x,A).$$ Then,
$v_I^s(x, A_{c({\ell})}) = v_I^s(x, A)$ for any $s=1,\ldots,k$ and thus,
$v_I(x, A_{c({\ell})}) = v_I(x, A).$

\item [A.4 Row dominance] Let $ A_r = \left(  \begin{array}{ccc}
   & A&  \\
  b_1 & \ldots & b_k
\end{array}  \right) $, $ b_{\ell}=(b_{\ell}^1,\ldots,b_{\ell}^{n_{\ell}}) \in \mathbb{R}^{n_{\ell}},\,
{\ell}=1,\ldots,k$  such that $b_{\ell} = \sum_{i=1}^m \alpha_i A^{i\cdot}({\ell})$
with  $\sum_{i=1}^m \alpha_i=1$ and $\alpha_i\ge 0$. Take
$\overline{x} \in X^1_{m+1}$, then
$$v_I(\overline{x}, A_r) = (\max \overline{x}A_r(1),\ldots,\max
\overline{x}A_r(k)).$$
\par \noindent The security level of $A_r$ in the ${\ell}$-th
component is
\begin{eqnarray*} v^{\ell}_I(\overline{x}, A_r) &= & \max (
\sum_{i=1}^m \overline{x}_i A^{\cdot 1}({\ell})+
\overline{x}_{m+1}b^1_{\ell},\ldots, \sum_{i=1}^m \overline{x}_i A^{\cdot
n_{\ell}}({\ell})+
  \overline{x}_{m+1}b^{n_{\ell}}_{\ell})\\ &=&
 \max ( \sum_{i=1}^m (\overline{x}_i+ \alpha_i \overline{x}_{m+1}
)A^{\cdot 1}({\ell})  ,\ldots, \sum_{i=1}^m (\overline{x}_i+ \alpha_i
\overline{x}_{m+1} )A^{\cdot {n_{\ell}}}({\ell}) )\\
& = & v_I^{\ell}(\widehat{x},A),
\end{eqnarray*}
where $\widehat{x} = (\overline{x}_1+\alpha_1
\overline{x}_{m+1},\ldots,\overline{x}_m+\alpha_m
\overline{x}_{m+1}) \in X_m.$

Then for any $\overline{x}\in X_{m+1}, \exists \widehat{x} \in
X_m$, such that $$ v^{\ell}_I(\overline{x}, A_r)= v^{\ell}_I(\widehat{x},
A_r), \forall {\ell}$$ and conversely.

\item [A.6 Consistency] Assume A.6 is not satisfied. Therefore, there
exists  $z=(z_1,\ldots,z_k) \in VPOSS_k(A)$ such that for all
$0<\alpha<1$, $(\alpha z_1+ (1-\alpha) z_2,z_3,\ldots,z_k) \notin
VPOSS_{k-1}(M[\alpha A(1), (1-\alpha) A(2)], A(3),\ldots,A(k)).$
Hence by Theorem \ref{multiples}, $\nexists
(\beta_1,\beta_3,\ldots,\beta_{k})\in X^>_{k-1}$ such that
{\scriptsize $$  \beta_1\alpha z_1 + \beta_1 (1-\alpha) z_2 + \beta_3z_3+\ldots+ \beta_k z_k =
val (M[\beta_1M[\alpha A(1), (1-\alpha) A(2)],\beta_3
A(3),\ldots,
 \beta_k A(k)]).$$}
 However, this contradicts that $z$ is a payoff vector of a POSS, since by Theorem \ref{t:Voorn} there must exist $(\lambda_1,\ldots,\lambda_k)\in X^>_k$,
 such that $$\sum_{{\ell}=1}^k \lambda_{\ell} z_{\ell} = val (M[ \lambda_1 A(1),\ldots,\lambda_k A(k)]).$$

 Thus, $VPOSS_k$ satisfies A.6.
\end{description}

To finish the proof, it is enough to show that that if $\{f_k\}_{k\ge 1}$ is an arbitrary family  of point-to-set maps that satisfy A.0, A.1, A.2, A.4, and A.6, then for any general multicriteria game given by the matrix $A=(A(\ell))_{\ell=1\ldots k}$ with $A(\ell) \in \mathbb{R}^{m\times n_{\ell}}$ we get $f_k(A) \subseteq VPOSS_k(A) \quad
\forall\; k\geq 1.$


Indeed, for $k=1$, the axioms A.0, A.1, A.2 and A.4 characterize the $val(.)$
 function of a matrix game (see Carpente et al, \cite[Theorem 2]{car}). Therefore, $f_1(.)= val(.)$.

 Let $z \in f_k(A), k\ge 2$. Apply A.6 $k$-times to conclude that there exists
 $\alpha\in \mathbb{R}^k$, $\sum_{{\ell}=1}^k \alpha_{\ell}=1,\; \alpha_{\ell}>0$, such that

 $$\sum_{{\ell}=1}^k \alpha_{\ell} z_{\ell} = val ( M[\alpha_1 A(1),\ldots,\alpha_k A(k)]).$$
Then, by Theorem \ref{t:Voorn}, $z$ is a payoff vector of a POSS
of the multi-matrix $A$. Hence, $$ f_k(A) \subseteq VPOSS_k(A) \quad
\forall A,\; k\geq 1.$$

\hfill$\Box$

After a careful reading of the proof one realizes that an
alternative characterization is still possible using weaker
versions of properties A.0-A.4.

\begin{corollary} \label{c:main}
If the properties A.0, A.1 ,A.2 and A.4 are required only for $k=1$, the characterization of Theorem
\ref{main} still holds.
\end{corollary}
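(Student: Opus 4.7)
The plan is to revisit the proof of Theorem~\ref{main} and extract the observation that each of the two inclusions actually needs axioms A.0--A.4 only in the single-criterion case. Since the weakened axiom system is strictly easier to satisfy, I only need to check (i) that $VPOSS_k$ still lies in the family of admissible maps, and (ii) that the maximality argument survives unchanged.

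For (i), Theorem~\ref{main} has already established that $VPOSS_k$ satisfies A.0, A.1, A.2 and A.4 for \emph{every} $k \geq 1$, so a fortiori it satisfies them in the restricted case $k=1$; and A.6 was proved there for all $k \geq 2$. Thus $VPOSS_k$ remains a candidate under the weakened hypotheses, and no new verification is required here.

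For (ii), let $\{f_k\}_{k\geq 1}$ be any family of point-to-set maps satisfying A.0, A.1, A.2 and A.4 at $k=1$ only, together with A.6 for all $k \geq 2$. In the single-criterion case, the four axioms are precisely those used by Carpente et al.~\cite[Theorem~2]{car} to characterize $val(\cdot)$, hence $f_1(\cdot) = val(\cdot) = VPOSS_1(\cdot)$. For $k \geq 2$ and any $z \in f_k(A)$, I would iterate A.6 exactly $k-1$ times, as in the proof of Theorem~\ref{main}, to obtain weights $\alpha \in X^>_k$ with
$$\sum_{\ell=1}^k \alpha_\ell z_\ell \in f_1\bigl(M[\alpha_1 A(1),\ldots,\alpha_k A(k)]\bigr) = val\bigl(M[\alpha_1 A(1),\ldots,\alpha_k A(k)]\bigr),$$
and then invoke Theorem~\ref{t:Voorn} to conclude that $z \in VPOSS_k(A)$. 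This chain invokes A.0--A.4 only in the very last step, where the problem has already been reduced to $k=1$, so the weakening is harmless.

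The main obstacle, if any, is purely notational: one must verify that each successive application of A.6 delivers an amalgamated matrix of the shape demanded by Theorem~\ref{t:Voorn}, and that the weights produced along the way can be combined into a single vector in $X^>_k$. Since this is the same bookkeeping already carried out in the proof of Theorem~\ref{main}, no new ideas are required beyond pinpointing that the full-$k$ versions of A.0--A.4 are never used in the inclusion argument.
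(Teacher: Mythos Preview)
Your proposal is correct and follows precisely the approach the paper intends: the paper does not give a separate proof of the corollary but merely notes that ``after a careful reading of the proof one realizes'' that A.0--A.4 are invoked only at the final $k=1$ stage (via \cite[Theorem~2]{car}), while the reduction from $k$ to $1$ uses only A.6. You have spelled out exactly this observation, including the point that $VPOSS_k$ trivially remains a candidate under the weakened hypotheses because it was already shown to satisfy the full-strength versions.
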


{This corollary implies that $VPOSS _k$ is the largest map on $\mathcal D_2$ that satisfies consistency (A6) and that coincides with the value function on standard single criterion matrix games.
Another characterization of Pareto-optimal
security level vectors, similar to the one in Theorem \ref{t:ccMm}, is also is possible based on
Carpente et al. \cite[Theorem 3]{car}.
}

\begin{theorem} \label{t:cmain}
The set of Pareto-optimal security level vectors $VPOSS_k$ is the largest (w.r.t. inclusion) map on $\mathcal D_2$ that satisfies
 objectivity,   column dominance, row
dominance, column elimination, row elimination and consistency.
\end{theorem}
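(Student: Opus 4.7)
The proof will follow the same two-stage blueprint as Theorem \ref{main}: first verify that $VPOSS_k$ itself satisfies the six listed properties, and then show that any family of correspondences $\{f_k\}_{k\ge 1}$ satisfying these properties is included in $VPOSS_k$. Since A.0, A.2, A.4 and A.6 were already shown to hold for $VPOSS_k$ in the proof of Theorem \ref{main}, the first stage reduces to checking column elimination (A.3) and row elimination (A.5). For A.3, note that removing column $c(\ell)$ of $A(\ell)$ restricts player II in the $\ell$-th component game only, so for every $x\in X_m$ one has $v^{\ell}_I(x,A_{-c(\ell)})\le v^{\ell}_I(x,A)$ and $v^{s}_I(x,A_{-c(\ell)})=v^{s}_I(x,A)$ for $s\ne \ell$; hence $v_I(x,A_{-c(\ell)})\le v_I(x,A)$ componentwise. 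If $z\in VPOSS_k(A_{-c(\ell)})$ with witness $x$, then either $x$ remains Pareto-optimal in $A$ (so $z\in VPOSS_k(A)+\mathbb{R}^k_-$), or else it is dominated by some POSS $x'$ in $A$ whose security vector lies weakly below $z$. For A.5, every $x\in X_m$ available in $A_{-r}$ is identified with a strategy in $X_{m+1}$ by inserting a zero in the eliminated row, and $v_I(x,A_{-r})=v_I(x,A)$; therefore any $z\in VPOSS_k(A_{-r})$ equals $v_I(x,A)$ for some $x\in X_{m+1}$, and is thus weakly dominated from below by some POSS of $A$, giving $z\in VPOSS_k(A)+\mathbb{R}^k_+$.

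For the second stage, let $\{f_k\}_{k\ge 1}$ be any family satisfying A.0, A.2--A.5 and A.6, and let $A\in \mathcal{D}_2$ with $A=(A(1),\ldots,A(k))$. For the scalar case $k=1$, the plan is to invoke Theorem~3 of Carpente et al.~\cite{car}, which characterizes $val(\cdot)$ on the class of scalar matrix games precisely by objectivity, column dominance, row dominance, column elimination and row elimination. This immediately forces $f_1(\cdot)=val(\cdot)=VPOSS_1(\cdot)$.

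For $k\ge 2$, I will iterate A.6 exactly as in the proof of Theorem \ref{main}. Given $z\in f_k(A)$, a first application of consistency produces $\alpha_1\in(0,1)$ such that $(\alpha_1 z_1+(1-\alpha_1)z_2,z_3,\ldots,z_k)$ lies in $f_{k-1}$ of an amalgamated game with $k-1$ criteria; iterating $k-1$ times collapses the $k$ criteria into a single scalar game over an amalgamated matrix. Combining the coefficients produced at each step yields a vector $\alpha\in X^{>}_k$ such that $\sum_{\ell=1}^k \alpha_\ell z_\ell$ lies in $f_1$ of the fully amalgamated matrix $M[\alpha_1 A(1),\ldots,\alpha_k A(k)]$. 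Since $f_1=val$ by the scalar characterization, this equals $val(M[\alpha_1A(1),\ldots,\alpha_kA(k)])$, and Voorneveld's Theorem \ref{t:Voorn} then certifies that $z\in VPOSS_k(A)$. Together with the first stage this proves maximality.

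The main obstacle is the bookkeeping in the iterative application of A.6: each application reduces the number of criteria by one while transforming two payoff matrices into a single amalgamated matrix $M[\,\cdot\,,\,\cdot\,]$, and the weights $\alpha_\ell$ obtained at different steps must be properly multiplied together so that after all $k-1$ reductions one still has a strictly positive weight vector in $X^{>}_k$ summing to one. This tracking is the same as in Theorem \ref{main}; the only genuinely new ingredient here is the replacement of monotonicity by column/row elimination in the scalar base step, which is handled by Carpente et al.'s alternative axiomatization \cite[Theorem 3]{car}.
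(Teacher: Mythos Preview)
Your overall strategy is exactly the paper's: it only says that the argument runs like Theorem~\ref{main} but with \cite[Theorem~3]{car} replacing \cite[Theorem~2]{car} in the scalar base case, and your second stage (iterating A.6 down to $k=1$ and invoking Voorneveld's Theorem~\ref{t:Voorn}) as well as your treatment of A.5 carry this out correctly.

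The genuine gap is in your verification of column elimination (A.3) for general $k$. In the second case you claim that if the witness $x$ fails to be a POSS in $A$, then it is dominated by some POSS $x'$ in $A$ ``whose security vector lies weakly below $z$''; but domination only gives $v_I(x',A)\lneq v_I(x,A)$, which forces neither $v_I(x',A)\ge z$ nor $v_I(x',A)\le z$, and in any event a POSS vector \emph{below} $z$ would place $z$ in $VPOSS_k(A)+\mathbb{R}^k_+$, not $+\mathbb{R}^k_-$. This step is not repairable, because A.3 actually fails for $VPOSS_k$ when $k\ge 2$: with $A(1)=\left(\begin{smallmatrix}0&3\\3&0\end{smallmatrix}\right)$ and $A(2)=\left(\begin{smallmatrix}3&0\\0&3\end{smallmatrix}\right)$ one computes $VPOSS_2(A)=\{(3/2,3/2)\}$, while deleting the second column of $A(1)$ yields a game whose VPOSS set contains $(0,3)\notin\{(3/2,3/2)\}+\mathbb{R}^2_-$. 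What the maximality direction genuinely needs is only the five properties at $k=1$ (where $VPOSS_1=val$ and they are classical), together with A.6; this is the mechanism of Corollary~\ref{c:main}, and the paper's implied proof relies on that restriction rather than on A.3 holding for every $k$.
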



Comparing theorems \ref{t:cMm}  and \ref{t:cmain} we realize that Pareto-optimal security level vectors  and extended minimax payoff vectors  differ only in the way in which solution payoff vectors from $k$-criteria games are transformed into solution payoff vectors of $(k-1)$-criteria games (consistency properties). The former requires to amalgamate strategies in a game with lower dimension whereas the latter requires to do convex combinations of payoff matrices.

\section{Pairwise logical independence of the properties.\label{sec: independence}}

\medskip

This section shows that the previously presented characterizations use {pairwise} logically independent properties. In doing that we will use some results from the literature plus three additional evaluation maps.

First of all, we observe that since properties $A0-A7$ must hold for any $k\ge 1$ it is enough to show that there exist evaluation maps, for particular choices of $k$, fulfilling only some of these properties. In most cases, this is possible for $k=1$. We introduce three evaluation maps $h_0,\; h_1,\; h_2$, defined on  $G_1=\bigcup_{\scriptsize m,p\in \mathbb{N}} \mathbb{R}^{m\times p}$, the space of the scalar zero-sum games, $h_i:G_1 \rightarrow \mathbb{R}$ for $i=0,1,2$, such that for any $B\in G_1$:
\begin{eqnarray}
h_0(B) & = & 0,\label{l:g0}\\
h_1(B) & = & b_{11},\label{l:g1}\\
h_2(B) & =& \min_{1\le i\le row(B)} b_{i1}, \label{l:g2}
\end{eqnarray}
where $row(B)$ is the number of rows of the matrix $B$.

\underline{\textbf{A0}:}
First of all, to show that objectivity (A0) is logically independent of the rest of properties we use the null function $h_0$. For $k=1$, \cite{norde04} proves that the null function satisfies monotonicity (A1), row dominance (A4) and row elimination (A5). It is also easy to see that $h_0$ also satisfies column elimination ($A3$). However, it does not satisfy objectivity (A0). In addition, \cite{vil} proves using its function $f_4$ that objectivity (A0) and column dominance (A2) are independent.

\underline{\textbf{A1}:}
For $k=1$, the independence of monotonicity (A1) from column dominance (A2) follows from \cite[Theorem 4]{vil}; from column elimination (A3) easily follows using our function $h_1$ and from row dominance (A4) and row elimination (A5) it is proved in \cite{norde04} using its function $f_6$.

\underline{\textbf{A2}:}
Our function $h_1$ satisfies column dominance (A2) but does not verify column elimination (A3) nor row elimination (A5); whereas Theorem 4 in \cite{vil} proves that column dominance (A2) and row dominance (A4) are independent.

\underline{\textbf{A3}:}
Our function $h_1$ shows that column elimination (A3) and row dominance (A4) are independent. The function $h_2$ satisfies row elimination (A5) but does not satisfy column elimination (A3). Moreover, \cite{norde04} shows with its function $f_4$ that row dominance (A4) and column elimination (A3) are independent.

\underline{\textbf{A6,A7}:}
Finally, to prove that objectivity (A0), monotonicity (A1), column dominance (A2), column elimination (A3), row dominance (A4) and  row elimination (A5)  are independent of consistency (A6) and linear consistency (A7) we use the correspondences $\VMm _k$ and $VPOSS _k$, respectively. Examples \ref{ejemplo} and \ref{ex:cont} show that $\VMm _k$ does not satisfy Consistency (A6) and $VPOSS_ k$ does not satisfy Linear Consistency (A7).


Carpente et al. \cite{car} show that the minimax value satisfies
axioms $A0-A5$, although, as shown in the next example, it does not
satisfy  \textit{Consistency}.

\begin{example}\label{ejemplo}
Consider the $2$-criteria game defined by the matrices $ A(1)=
(1,0)$, $A(2)= (0,1)$. (Note that player I only has one pure
strategy while player II has two pure strategies in this
$2$-criteria game.) The minimax values of this game are $(\alpha,1-\alpha),\; \forall \alpha\in [0,1]$. The reader may note that these values do not satisfy
\textit{Consistence} ($A.6$). Indeed,

 $$M\big[\alpha A(1),(1-\alpha)A(2)\big] = \big(\alpha,1,0,1-\alpha\big).$$

It is clear that the minimax value of the single criterion game with
the above matrix {\bf is $1$}. Hence, it can not be a convex combination
of $(\alpha,1-\alpha)$, the  minimax values of the original $2$-criteria game, for any $\alpha \in (0,1)$.\\
\end{example}
\bigskip

Finally, \textit{Linear consistency} is not satisfied by the
Pareto-optimal security payoffs, as shown in the next example.
\medskip

\begin{example}(\ref{ejemplo} continued)\par \label{ex:cont}

Consider the game given in Example \ref{ejemplo}, described by the
two matrices $ A(1)= (1,0)$, $A(2)= (0,1)$. {For this game the unique
Pareto-optimal security payoff is $(1,1)$.} Let us now consider the
game given by the matrix:
$$\alpha A(1)+(1-\alpha)A(2) = (\alpha,1-\alpha).$$
with $1/2 < \alpha\leq 1$. The value of this game is
$\alpha$. Therefore, the security payoff does not satisfy the property
of \textit{linear consistency} for any $\alpha\in (0,1)$,  because
$\alpha$ cannot be obtained as a convex combination of $(1,1)$.\\

The
reader may note that the minimax payoff $(1,0)$ does satisfy this property for $\alpha \in [1/2,1]$.
\end{example}

Table \ref{table:indep} summarizes the pairwise logical independence of properties.
\begin{center}
\begin{table}[h]
\scriptsize
\begin{tabular}{|c|c|c|c|c|c|c|c|c|}
\hline
& A0 & A1 & A2& A3 & A4 &A5 & A6 &A7 \\ \hline
A0 &  X & $h_0$ (\ref{l:g0}) & $f_4$ in \cite{vil} & $h_0$ (\ref{l:g0}) & $h_0$ (\ref{l:g0})& $h_0$ (\ref{l:g0}) & $\VMm _k$ & $VPOSS_k$ \\
\hline
A1 & & X & Th. 4 in \cite{vil} & $h_1$ (\ref{l:g1}) & $f_6$ in \cite{norde04}& $f_6$ in \cite{norde04}& $\VMm _k$ & $VPOSS_k$ \\
\hline
A2 & & & X & $h_1$ (\ref{l:g1})& Th. 4 in \cite{vil} & $h_1$ (\ref{l:g1}) & $\VMm _k$ & $VPOSS_k$ \\
\hline
A3 & & & & X & $h_1$ (\ref{l:g1}) & $h_2$ (\ref{l:g2})& $\VMm _k$ & $VPOSS_k$ \\
\hline
A4 & & & & & X & $f_4$ in \cite{norde04}& $\VMm _k$ & $VPOSS_k$ \\
\hline
A5 & & & & & & X & $\VMm _k$ & $VPOSS_k$ \\
\hline
A6 & & & & & & & X & $VPOSS_k$ \\
\hline
A7 & & & & & & & & X \\ \hline
\end{tabular}
\caption{Pairwise logical independence of properties \label{table:indep}}
\end{table}
\end{center}

\section{Relationship between minimax and POSS in  $\mathcal{D}_1$ and $\mathcal{D}_2$\label{s:6}}
{In this section we show the similarities between minimax and POSS by proving that, when one game in $\mathcal{D}_2$ is transformed into a game in $\mathcal{D}_1$, then the corresponding POSS transforms into minimax.}

In order to introduce our next result we need some further notation. Associated with any matrix
$A=(A(1),\ldots,A(k))$ with $A(\ell)\in \mathbb{R}^{m\times n_{\ell}}$ for all $\ell=1,\ldots,k$, let $AM(A(1)^{\cdot j_1},\ldots,A(k)^{\cdot j_k})\in \mathbb{R}^{m\times k}$ be the matrix that results by joining  the columns $A(\ell)^{\cdot j_{\ell}}$ in $A(\ell)$, for all $\ell=1,\dots,k$. We observe that there are $\prod_{\ell=1}^k n_{\ell}$ different such matrices depending on the different choices of columns in the matrices of $A$.

Next, let us denote as $EM(A(1),\ldots,A(k))\in \mathbb{R}^{m\times k\times \prod_{\ell=1}^k n_{\ell}}$ the matrix that results by joining all the different $AM(A(1)^{\cdot j_1},A(2)^{\cdot j_2},\ldots,A(k)^{\cdot j_k})$ for all $j_1=1,\ldots,n_1,\ldots, j_k=1,\ldots,n_k$. Now, it is clear that
$$\scriptsize EM(A(1),\ldots,A(k))=\big(AM(A(1)^{\cdot 1},\ldots,A(k)^{\cdot 1}),\stackrel{(\prod_{\ell=1}^k n_{\ell})}{\ldots},AM(A(1)^{\cdot n_1},\ldots,A(k)^{\cdot n_k})\big).$$

\begin{theorem}\label{th: transf}
A strategy $x^*$ is a $POSS$ for player $I$ in the  $k$-criteria matrix game  $A=(A(1),\ldots,A(k))\in \mathcal{D}_2$ with $A(\ell)\in \mathbb{R}^{m\times n_{\ell}}$ for all $\ell=1,\ldots,k$ if and only if there exists $\alpha(x^*)  \in X^>_k$ such that $(x^*,\alpha(x^*))$ is an extended minimax strategy in the $(\prod_{\ell=1}^k n_{\ell})$-criteria matrix game $EM(A(1),\ldots,A(k))\in \mathbb{R}^{m\times \prod_{\ell=1}^k n_{\ell}\times k}$.
\end{theorem}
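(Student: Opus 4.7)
The approach is to chain together the two characterizations already proved in the paper: Theorem \ref{t:Voorn} for POSS in $\mathcal{D}_2$ and Theorem \ref{new} for extended minimax in $\mathcal{D}_1$. The crucial observation is that both characterizations reduce the multicriteria problem to scalar minimax over the same matrix, built from the weights $\alpha \in X^>_k$ and the data of $A$.

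First I would unwind the construction of $EM(A)$ explicitly. Each $m\times k$ block $AM(A(1)^{\cdot j_1},\ldots,A(k)^{\cdot j_k})$ has entry $A(\ell)_{i,j_\ell}$ at position $(i,\ell)$, and assembling all such blocks, indexed by $(j_1,\ldots,j_k)\in\prod_{\ell=1}^k\{1,\ldots,n_\ell\}$, yields a game in $\mathcal{D}_1$ whose $\ell$-th criterion matrix $EM(A)(\ell)\in \mathbb{R}^{m\times \prod_{\ell=1}^k n_\ell}$ has entry $A(\ell)_{i,j_\ell}$ at row $i$ and column $c=(j_1,\ldots,j_k)$. A one-line computation then gives the identity that drives the proof:
\begin{equation*}
\sum_{\ell=1}^k \alpha_\ell\, EM(A)(\ell) \;=\; M\bigl(\alpha_1 A(1),\ldots,\alpha_k A(k)\bigr),
\end{equation*}
where $M$ is the amalgamated scalar matrix appearing in Theorem \ref{t:Voorn}, since both sides have entry $\sum_{\ell=1}^k \alpha_\ell A(\ell)_{i,j_\ell}$ at $(i,(j_1,\ldots,j_k))$.

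Given this identity, the biconditional reduces to substitution. For the only-if direction, if $x^*$ is a POSS then Theorem \ref{t:Voorn} supplies $\alpha(x^*)\in X^>_k$ such that $x^*$ is a minimax strategy in $M(\alpha(x^*)_1 A(1),\ldots,\alpha(x^*)_k A(k)) = \sum_{\ell=1}^k \alpha(x^*)_\ell EM(A)(\ell)$; the converse direction of Theorem \ref{new}, applied to $EM(A)\in\mathcal{D}_1$ with the weights $\alpha(x^*)$, then produces an extended minimax payoff vector of $EM(A)$ attained at $x^*$, so $x^*$ is an extended minimax strategy of $EM(A)$ and $\alpha(x^*)$ is the associated weight vector. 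For the if-direction, the direct part of Theorem \ref{new} applied to the extended minimax strategy $x^*$ of $EM(A)$ yields some $\alpha(x^*)\in X^>_k$ such that $x^*$ is a minimax strategy in $\sum_{\ell=1}^k \alpha(x^*)_\ell EM(A)(\ell)$, which by the identity equals $M(\alpha(x^*)_1 A(1),\ldots,\alpha(x^*)_k A(k))$, and Theorem \ref{t:Voorn} then concludes that $x^*$ is a POSS in $A$.

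The point that requires the most care is the interpretation of the pair $(x^*,\alpha(x^*))$: here $x^*$ is the extended minimax strategy for player I in $EM(A)$, while $\alpha(x^*)\in X^>_k$ plays a dual role, acting both as the POSS weight vector in the original $\mathcal{D}_2$ game and as the strictly positive convex combination witnessing the extended minimax property of $x^*$ in $EM(A)$ via Theorem \ref{new}. Once the matrix identity above is in place, no further computation is required, and the apparent obstacle, namely reconciling the quite different-looking reductions in Theorems \ref{t:Voorn} and \ref{new}, collapses to this single algebraic check.
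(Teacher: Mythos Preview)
Your argument rests on a misreading of the object $EM(A)$. You treat it as a $k$-criteria game whose $\ell$-th criterion matrix lies in $\mathbb{R}^{m\times \prod_\ell n_\ell}$, with entry $A(\ell)_{i,j_\ell}$ at row $i$ and column $(j_1,\ldots,j_k)$. That is precisely the matrix $\bar A(\ell)$ of Theorem~\ref{th: transf_fed}, not $EM(A)$. As the paper defines it (and as the worked example confirms), $EM(A)$ is the tuple of the $\prod_\ell n_\ell$ matrices $AM(A(1)^{\cdot j_1},\ldots,A(k)^{\cdot j_k})\in\mathbb{R}^{m\times k}$, one for each multi-index $(j_1,\ldots,j_k)$: it is a $(\prod_\ell n_\ell)$-criteria game in which player~II has only $k$ columns. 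Consequently your key identity $\sum_\ell \alpha_\ell\, EM(A)(\ell)=M(\alpha_1 A(1),\ldots,\alpha_k A(k))$ is about the wrong object, and your invocation of Theorem~\ref{new} with weights $\alpha\in X^>_k$ does not apply: Shapley's weights for $EM(A)$ live in $X^>_{\prod_\ell n_\ell}$, not in $X^>_k$. For the same reason, your interpretation of $\alpha(x^*)$ as the Shapley weight witnessing the extended minimax property of $x^*$ in $EM(A)$ cannot be right; in $EM(A)$ the vector $\alpha\in X^>_k$ is naturally a strategy for player~II (each criterion matrix has $k$ columns), which is how the pair $(x^*,\alpha(x^*))$ should be read.

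The paper's route is genuinely different from a one-line matrix identity. It establishes instead that $(EM(A)\beta)\alpha=M(\alpha_1 A(1),\ldots,\alpha_k A(k))\beta$ for $\beta\in Y_{\prod_\ell n_\ell}$, then starts from Theorem~\ref{t:Voorn}, takes $\beta^*$ to be player~II's optimal strategy in the scalar game $M(\alpha_1 A(1),\ldots,\alpha_k A(k))$, and shows via the scalar minimax theorem that $x^*$ is minimax in the $m\times k$ matrix $\sum_{j_1,\ldots,j_k}\beta^*_{j_1\ldots j_k}\,AM(A(1)^{\cdot j_1},\ldots,A(k)^{\cdot j_k})$; only then does Theorem~\ref{new}, applied with the $(\prod_\ell n_\ell)$-dimensional weight $\beta^*$, yield that $x^*$ is extended minimax in $EM(A)$. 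The passage through $\beta^*$ is the step your proposal is missing once the correct structure of $EM(A)$ is restored.
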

\begin{proof}
Let $\alpha^t=(\alpha_1,\ldots,\alpha_k)\in \mathbb{R}^k$ such that $\alpha_{\ell}\ge 0$ and $\sum_{\ell=1}^k \alpha_{\ell}=1$; and   $\beta^t=(\beta_{j_1\ldots j_k})_{j_1\ldots j_k}\in \mathbb{R}^{\prod_{\ell=1}^k n_{\ell}}$ such that $\beta_{j_1\ldots j_k}\ge 0$ and $\sum_{j_1\ldots j_k} \beta_{j_1\ldots j_k}=1$.
We observe that $EM(A(1),\ldots,A(k))\beta=\sum_{j_1\ldots j_k} \beta_{j_1\ldots j_k} AM(A(1)^{\cdot j_1},\ldots,A(k)^{\cdot j_k}) \in \mathbb{R}^{m\times k}$, and $M(\alpha_1 A(1),\ldots,\alpha_k M(k)) \beta= \sum_{j_1\ldots j_k} \beta_{j_1\ldots j_k} \sum_{\ell=1}^k \alpha_{\ell} A(\ell)^{\cdot j_{\ell}} \in \mathbb{R}^m.$ Therefore,
\begin{eqnarray*} (EM((A(1),\ldots,EM(k))\beta) \alpha&=&\sum_{\ell=1}^k \alpha_{\ell} \sum_{j_1\ldots j_k} \beta_{j_1\ldots j_k}AM(A(1)^{\cdot j_1},\ldots,A(k)^{\cdot j_k})   ,\\
& =& \sum_{j_1\ldots j_k} \beta_{j_1\ldots j_k} \sum_{\ell=1}^k \alpha_{\ell} A(\ell)^{\cdot j_{\ell}}\\
&=& M(\alpha_1 A(1),\ldots,\alpha_k M(k)) \beta.
\end{eqnarray*}
Now, we know that $x^*$ is a POSS for player I if and only if it exists $\alpha^t=(\alpha_1,\ldots,\alpha_k)\in \mathbb{R}^k$ satisfying $\alpha_{\ell}\ge 0$ and $\sum_{\ell=1}^k \alpha_{\ell}=1$ such that $x^*$ is a minimax strategy for single criterion game $M(\alpha_1 A(1),\ldots,\alpha_k M(k))$. This is equivalent to satisfy  that
$$ \max_{\beta } (x^*)^t M(\alpha_1 A(1),\ldots,\alpha_k A(k)) \beta = \min_{x\in X^{>}_m}\max_{\beta}x^t M(\alpha_1 A(1),\ldots,\alpha_k A(k)) \beta.$$
Let $\beta^*$ be the element where the above maximum is attained.
In particular
$$ {\scriptsize  (x^*)^t M(\alpha_1 A(1),\ldots,\alpha_k A(k)) \beta^* =  \min_{x\in X^{>}_m} x^t M(\alpha_1 A(1),\ldots,\alpha_k A(k)) \beta^*. } $$
This is equivalent to
{\scriptsize
\begin{eqnarray*} \sum_{i=1}^m x_i^* \sum_{j_1\ldots j_k} \beta_{j_1\ldots j_k}^* \sum_{\ell=1}^k \alpha_{\ell} A(\ell)^{\cdot j_{\ell}}&=& \min_{x\in X^{>}_m}    \sum_{i=1}^m x_i \sum_{j_1\ldots j_k} \beta_{j_1\ldots j_k}^* \sum_{\ell=1}^k \alpha_{\ell} A(\ell)^{\cdot j_{\ell}} \\
&=&  \min_{x\in X^{>}_m}   \sum_{i=1}^m x_i \sum_{\ell=1}^k \alpha_{\ell} \sum_{j_1\ldots j_k} \beta_{j_1\ldots j_k}^* AM(A(1)^{\cdot j_1},\ldots,A(k)^{\cdot j_k}) \\
\left(\begin{array}{c}\mbox{by scalar minimax theorem applied to matrix}\\ \sum_{j_1\ldots j_k} \beta_{j_1\ldots j_k}^*AM(A(1)^{\cdot j_1},\ldots,A(k)^{\cdot j_k}) \end{array}\right) &=& \min_{x\in X^{>}_m}     \sum_{i=1}^m x_i \sum_{\ell=1}^k \alpha_{\ell} \sum_{j_1\ldots j_k} \beta_{j_1\ldots j_k}^* AM(A(1)^{\cdot j_1},\ldots,A(k)^{\cdot j_k})
\end{eqnarray*}
}
This last equation means that $x^*$ is also a minimax strategy for the single criterion game with matrix $\sum_{j_1\ldots j_k} \beta_{j_1\ldots j_k}^*AM(A(1)^{\cdot j_1},\ldots,A(k)^{\cdot j_k})\in \mathbb{R}^{m\times k}$ where $\beta^*$ is the element where the above maximum is attained. This last matrix is the convex combination with coefficient $\beta^*$ of the matrices $AM(A(1)^{\cdot j_1},\ldots,A(k)^{\cdot j_k})$ therefore by applying Theorem \ref{new}, it follows that $x^*$ is also an extended minimax strategy of the multicriteria game with matrix $$\big(AM(A(1)^{\cdot 1},\ldots,A(k)^{\cdot 1}),\stackrel{(\prod_{\ell=1}^k n_{\ell})}{\ldots},AM(A(1)^{\cdot n_1},\ldots,A(k)^{\cdot n_k})\big).$$
This matrix is by definition $EM(A(1),\ldots,A(k))$ which concludes the proof.
\end{proof}

The next results shows another transformation of a game in $\mathcal{D}_2$ into a game in $\mathcal{D}_1$.

\begin{theorem}\label{th: transf_fed}
Every game $A \in \mathcal{D}_2$ can be transformed into a game $\bar A \in \mathcal{D}_1$. Besides, given $(x,y)$ strategies in $A$ for players I and II, there exists $(\bar x,\bar y)$ strategies for I and II in $\bar A$ such that $xAy = \bar x \bar A \bar y$.
\end{theorem}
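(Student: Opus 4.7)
The plan is to exhibit an explicit construction of $\bar A$ from $A$ and then, given arbitrary strategies $(x,y)$ in $A$, to define $(\bar x, \bar y)$ for $\bar A$ in such a way that the two vector payoffs coincide componentwise. The natural candidate, suggested by the $EM$-construction used in Theorem~\ref{th: transf}, is a \emph{product expansion}: set $N := \prod_{\ell=1}^k n_\ell$ and, for each $\ell = 1,\ldots,k$, define $\bar A(\ell) \in \mathbb{R}^{m\times N}$ by indexing its columns by tuples $(j_1,\ldots,j_k) \in \{1,\ldots,n_1\} \times \cdots \times \{1,\ldots,n_k\}$ and setting
\[ \bar A(\ell)^{\cdot (j_1,\ldots,j_k)} := A(\ell)^{\cdot j_\ell}. \]
Then $\bar A = (\bar A(1),\ldots,\bar A(k)) \in \mathbb{R}^{m\times N \times k}$ lies in $\mathcal{D}_1$, because all $k$ component matrices share the common column dimension $N$.

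For the strategy part, given $x \in X_m$ and $y = (y(1),\ldots,y(k)) \in Y_{(n_1,\ldots,n_k)}$ in the original game, set $\bar x := x \in X_m$ and let $\bar y \in \mathbb{R}^N$ be the \emph{product distribution}
\[ \bar y_{(j_1,\ldots,j_k)} := \prod_{\ell=1}^k y(\ell)_{j_\ell}. \]
Non-negativity is immediate, and $\sum_{(j_1,\ldots,j_k)} \bar y_{(j_1,\ldots,j_k)} = \prod_{\ell=1}^k \sum_{j_\ell} y(\ell)_{j_\ell} = 1$, so $\bar y \in Y_N$, i.e.\ $\bar y$ is a valid mixed strategy for player II in $\bar A$.

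To conclude it suffices to show that $\bar x^t \bar A(\ell) \bar y = x^t A(\ell) y(\ell)$ for every $\ell = 1,\ldots, k$. Because each column of $\bar A(\ell)$ depends on the multi-index $(j_1,\ldots,j_k)$ only through its $\ell$-th coordinate,
\[ \bar x^t \bar A(\ell) \bar y \;=\; \sum_{(j_1,\ldots,j_k)} x^t A(\ell)^{\cdot j_\ell} \prod_{s=1}^k y(s)_{j_s} \;=\; \sum_{j_\ell} x^t A(\ell)^{\cdot j_\ell}\, y(\ell)_{j_\ell} \prod_{s \neq \ell} \sum_{j_s} y(s)_{j_s} \;=\; x^t A(\ell) y(\ell), \]
where the middle step marginalizes out every index $j_s$ with $s \neq \ell$ via $\sum_{j_s} y(s)_{j_s} = 1$. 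Stacking these equalities over $\ell$ gives $\bar x\, \bar A\, \bar y = x A y$ as vectors in $\mathbb{R}^k$.

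The only step that demands care is the bookkeeping of multi-indices in the final display; there is no genuine mathematical obstacle. The construction works precisely because a product distribution over tuples, when fed into a matrix that ``reads'' only one coordinate of the tuple, collapses to the marginal on that coordinate --- and that marginal is exactly the strategy $y(\ell)$ used by player II in the $\ell$-th component of the original game in $\mathcal{D}_2$.
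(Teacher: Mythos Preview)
Your proof is correct and follows essentially the same approach as the paper: the same product-expansion construction $\bar A(\ell)^{\cdot(j_1,\ldots,j_k)} = A(\ell)^{\cdot j_\ell}$, the same product distribution $\bar y_{(j_1,\ldots,j_k)} = \prod_\ell y(\ell)_{j_\ell}$, and the same marginalization argument to verify $x^t A(\ell) y(\ell) = \bar x^t \bar A(\ell) \bar y$. The paper carries out the last computation row-by-row rather than column-by-column, but the content is identical.
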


\begin{proof}
Given is $A = (A(1),...,A(k))$, with $A(\ell) \in \mathbb{R}^{m \times n_{\ell}}$, which defines a game in $\mathcal{D}_2$. The strategy sets for players I and II are as defined before: $X_m, Y_{n_1,...,n_k}$.

Let us define $\bar A=(\bar A(1),...,\bar A(k))$, with $\bar A(\ell) \in \mathbb{R}^{m \times \prod_{\ell} n_{\ell}}$, which defines a game in $\mathcal{D}_1$, built in such a way that $\bar A(\ell)_{i,(j_1,...,j_k)} = A(\ell)_{ ij_{\ell}}, \ \forall \ i=1,...,m, j_{\ell} = 1,...,n_{\ell}, \ell = 1,...,k$. The strategy sets for players I and II are $X_m$ (the same as in game $A$) and $\bar Y_{\prod} = \{y \in \mathbb{R}^{\prod_{\ell} n_{\ell}} : \sum_{(j_1,...,j_k)} y_{(j_1,...,j_k)} = 1, y \geq 0 \}$.

Given two strategies $x=(x_1,...,x_m) \in X_m$ and $y=(y(1),...,y(k)) \in Y_{n_1,...,n_k}$, define $\bar x = x$ and $\bar y: \bar y_{(j_1,...,j_k)} = \prod_{\ell} y(\ell)_{j_{\ell}}$.

\begin{itemize}
\item Clearly, $\bar x \in X_m$. Let us see that $\bar y \in \bar Y_{\prod}$.

$\sum_{(j_1,...,j_k)} \bar y_{(j_1,...,j_k)} = \sum_{(j_1,...,j_k)} \prod_{\ell} y(\ell)_{j_{\ell}} = \prod_{\ell}  \sum_{j_{\ell}} y(\ell)_{j_{\ell}} = 1.$

\item Now, let us check that $xAy = \bar x \bar A \bar y$. For this purpose, we need to prove that $xA(\ell)y(\ell) = \bar x \bar A(\ell) \bar y, \ \forall \ \ell = 1,...,k$.

    Take $\ell \in \{1,...,k\}$. Because $xA(\ell)y(\ell) = \sum_{i} x_i (A(\ell) y(\ell))_i$, and $\bar x \bar A(\ell) \bar y = \sum_{i} \bar x_i (\bar A(\ell) \bar y)_i= \sum_{i} x_i (\bar A(\ell) \bar y)_i$, the proof of this statement reduces to check that $(A(\ell) y(\ell))_i = (\bar A(\ell) \bar y)_i, \ \forall \ i=1,...,m.$

\begin{enumerate}
\item    $(A(\ell) y(\ell))_i = \sum_{j_{\ell}} A(\ell)_{i,j_{\ell}} y(\ell)_{j_{\ell}}$.
\item   $$
        \begin{array}{rl}
        (\bar A(\ell) \bar y)_i & = \sum_{(j_1,..,j_k)} \bar A(\ell)_{i,(j_1,...,j_k)} \bar y_{(j_1,...,j_k)} \\ & = \sum_{(j_1,..,j_k)} A(\ell)_{i,j_{\ell}} \prod_{\ell'}y(\ell')_{j_{\ell'}} \\
        & =\sum_{j_{\ell}}( \sum_{J \setminus j_{\ell}} A(\ell)_{i,j_{\ell}}\prod_{\ell'}y(\ell')_{j_{\ell'}}) \\
        &=\sum_{j_{\ell}}A(\ell)_{i,j_{\ell}} y(\ell)_{j_{\ell }}(\sum_{J \setminus j_{\ell}} \prod_{\ell' \neq \ell }y(\ell')_{j_{\ell'}}) \\
        &= \sum_{j_{\ell}}A(\ell)_{i,j_{\ell}} y(\ell)_{j_{\ell }}(\prod_{\ell' \neq \ell }\sum_{ j_{\ell'}} y(\ell')_{j_{\ell'}}) \\
        & =    \sum_{j_{\ell}}A(\ell)_{i,j_{\ell}} y(\ell)_{j_{\ell }}=(A(\ell) y(\ell))_i.
            \end{array},$$ where $J$ is sometimes used to denote the complete vector of indexes $(j_1,...,j_k)$.
\end{enumerate}
This proves that $(\bar A(\ell) \bar y)_i = (A(\ell) y(\ell))_i \ \forall \ \ell, i$, and therefore $xAy = \bar x \bar A \bar y$.
\end{itemize}
\end{proof}

\begin{example}Applying the transformation in Theorem \ref{th: transf_fed} to the game in Example \ref{ex:gameD2}, the new payoff matrix is:
$$  \bar A = \left(
  \begin{array}{cccccc}
(0,-2) & (0,0) & (0,0.5) & ({-1},-2)  & ({-1},0) & ({-1},0.5) \\
(1,-1) & (1,0) & (1,1) & (0,-1) & (0,0) & (0,1) \\
  \end{array}
\right) ,
$$
If we write the same matrix in a similar way as we did for games in $\mathcal{D}_2$, we have
$ \bar A = (\bar A(1),\bar A(2))$, with:
$$  (\bar A(1),\bar A(2)) =
\left(
  \begin{array}{cc}
    \left(
      \begin{array}{cccccc}
        0 & \ 0 & \ 0 & \ -1 & \ -1 & \ -1\\
        1 & \ 1 & \ 1 &  \ 0 & \  0 & \  0 \\
      \end{array}
    \right) ,
     &
     \left(
       \begin{array}{cccccc}
         -2 & \ 0 & \ 0.5 & \ -2 & \ 0 & \ 0.5  \\
         -1 & \ 0 & \ 1 & \ -1 & \ 0 & \ 1\\
       \end{array}
     \right)
     \\
  \end{array}
\right)
$$
Note that this new game has two criteria, two strategies for player I, and six ($3 \times 2$) strategies for player II. The strategy sets for the players are $X_2$ and $Y_6$.

If we apply to the original game the transformation in Theorem \ref{th: transf} we get:
$EM(A(1),A(2)) = A_j = (A_j(1), ... , A_j(6))$, where $A_j(1)$ ... $A_j(6)$ are:
$$
\left(
  \begin{array}{cccccc}
    \left(
      \begin{array}{cc}
        0 & \ -2\\
        1 & \  -1\\
      \end{array}
    \right) ,
     &
    \left(
      \begin{array}{cc}
        0 & \ 0\\
        1 & \ 0\\
      \end{array}
    \right) ,
     &
    \left(
      \begin{array}{cc}
        0 & \ 0.5\\
        1 & \ 1\\
      \end{array}
    \right) ,
     &
     \left(
       \begin{array}{cc}
         -1 & \ -2  \\
         0 & \ 1 \\
       \end{array}
     \right) ,
     &
     \left(
       \begin{array}{cc}
         -1 & \ 0  \\
         0 & \ 0 \\
       \end{array}
     \right) ,
          &
     \left(
       \begin{array}{cc}
         -1 & \ 0.5  \\
         0 & \ 1 \\
       \end{array}
     \right)
     \\
  \end{array}
\right)
$$
\end{example}

Let us denote by $\bar{\mathcal{D}}_1$ the class of games in $\mathcal{D}_1$ that can be obtained from a game in $\mathcal{D}_2$ as in Theorem \ref{th: transf_fed}. The following lemma states that strategies in the game in $\bar{ \mathcal{D}}_1$ can be transformed into strategies in the corresponding game in $\mathcal{D}_2$ keeping the same payoffs.

\begin{lemma}\label{lemm:D1toD2}
Let $A$ be a game in ${\mathcal{D}}_2$, and let $\bar A$ be its corresponding transformation into a game in $\bar{\mathcal{D}}_1$. Let $(x, \bar y)$ be a pair of mixed strategies for players I and II in game $\bar A$. Then, there exists a pair of strategies $(x,y)$ for the game $A$ such that $x \bar A \bar y = x A y$.
\end{lemma}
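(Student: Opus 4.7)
The plan is to construct the strategy pair $(x,y)$ for game $A$ by keeping $x$ unchanged and obtaining each component $y(\ell)$ as the $\ell$-th marginal of $\bar y$. More precisely, since $\bar y \in \bar Y_{\prod}$ can be viewed as a probability distribution over $k$-tuples $(j_1,\ldots,j_k)$ with $j_\ell \in \{1,\ldots,n_\ell\}$, I would define
\[
y(\ell)_{j_\ell} \;=\; \sum_{(j_1,\ldots,j_{\ell-1},j_{\ell+1},\ldots,j_k)} \bar y_{(j_1,\ldots,j_k)}, \qquad \ell=1,\ldots,k,
\]
and set $y=(y(1),\ldots,y(k))$. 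This mirrors the fact that the construction in Theorem \ref{th: transf_fed} sends a product strategy $y \in Y_{(n_1,\ldots,n_k)}$ into a product distribution on the index tuples; the marginals provide a natural inverse even when $\bar y$ is not itself a product distribution.

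First I would verify that $y \in Y_{(n_1,\ldots,n_k)}$. Nonnegativity of each $y(\ell)_{j_\ell}$ is immediate from $\bar y \ge 0$, and $\sum_{j_\ell} y(\ell)_{j_\ell} = \sum_{(j_1,\ldots,j_k)} \bar y_{(j_1,\ldots,j_k)} = 1$, so feasibility is clear. Clearly $x \in X_m$ needs no change.

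Next I would verify the payoff identity component-by-component, i.e.\ $x \bar A(\ell)\bar y = x A(\ell) y(\ell)$ for each $\ell$. Using the defining relation $\bar A(\ell)_{i,(j_1,\ldots,j_k)} = A(\ell)_{i,j_\ell}$ from Theorem \ref{th: transf_fed}, I would compute
\[
x \bar A(\ell)\bar y \;=\; \sum_{i=1}^m x_i \sum_{(j_1,\ldots,j_k)} A(\ell)_{i,j_\ell}\, \bar y_{(j_1,\ldots,j_k)},
\]
then swap the order of summation by grouping the inner sum according to the value of $j_\ell$ (pulling $A(\ell)_{i,j_\ell}$ out of the sum over the other indices). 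The inner sum over the remaining indices is precisely $y(\ell)_{j_\ell}$ by construction, which yields $\sum_i x_i \sum_{j_\ell} A(\ell)_{i,j_\ell} y(\ell)_{j_\ell} = x A(\ell) y(\ell)$. Summing over $\ell$ (or equivalently arguing componentwise) gives $x\bar A \bar y = x A y$.

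There is no significant obstacle here: the result is essentially the statement that marginalization is the left-inverse of the product construction used in Theorem \ref{th: transf_fed}, and the only care needed is the bookkeeping in the summation swap. The lemma then follows.
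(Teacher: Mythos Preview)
Your proposal is correct and follows essentially the same approach as the paper: both define $y(\ell)$ as the $\ell$-th marginal of $\bar y$, verify feasibility, and then check the payoff identity componentwise by using $\bar A(\ell)_{i,(j_1,\ldots,j_k)} = A(\ell)_{i,j_\ell}$ and grouping the sum over tuples according to the value of $j_\ell$. Your framing of the construction as ``marginalization is the left-inverse of the product construction'' is a nice conceptual addition, but the technical steps match the paper's proof exactly.
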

\begin{proof}
Let $(x,\bar y)  \in X_m \times Y_{\prod}$ be a pair of strategies for game $\bar A$. Build $y=(y(1),...,y(k))$ so $y(\ell)_{j} = \sum_{(j_1,...,j_k) : j_{\ell} = j} \bar{y}_{j_1,...,j_k}$, for  $j=1,...,n_{\ell}, \ell = 1,...,k$. Clearly, $\sum_{j_{\ell}} y(\ell)_{j_{\ell}} = 1$, and all such components are positive, therefore $y \in Y_{n_1,...,n_k}$.

Take one of the criteria, $\ell$. We have that $x \bar{A}(\ell) \bar y = \sum_i \sum_{J} x_i \bar{A}(\ell)_{i,J} \bar{y}_J$, and $x {A}(\ell)  y = \sum_i \sum_{j_{\ell}} x_i {A}(\ell)_{i,j_{\ell}} {y}_{j_{\ell}}.$

The following equation proves that both payoffs are equal:
$$ \sum_{J} \bar{A}(\ell)_{i,J} \bar{y}_J = \sum_{J} {A}(\ell)_{i,j_{\ell}} \bar{y}_J =   \sum_{j_{\ell}}  {A}(\ell)_{i,j_{\ell}} \sum_{(j_1,...,j_k) : j_{\ell} }\bar{y}_J =  \sum_{j_{\ell}}  {A}(\ell)_{i,j_{\ell}} {y}_{j_{\ell}}.$$
\end{proof}

The following theorem states that POSS strategies in $\mathcal{D}_2$ can be transformed into POSS strategies in $\bar{\mathcal{D}}_1$, and viceversa.

\begin{theorem}
Let $A$ and $\bar A$ be a game in $\mathcal{D}_2$ and its corresponding transformation into a game in $\bar{\mathcal{D}}_1$. Then we have that  $(x^*,y^*)$ is a pair of POSS in game $A$ if and only if $(x^*,\bar{y}^*)$ is a pair of {minimax strategies} in game $\bar A$, {where $\bar y^* \in Y_{\prod}$ is obtained from $y^*$ as in Theorem \ref{th: transf_fed}.}
\end{theorem}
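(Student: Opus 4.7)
The plan is to show that the transformation $A \mapsto \bar A$ preserves the set of attainable payoff vectors for player I, which in turn forces $VPOSS_k(A)$ to coincide with $\VMm_k(\bar A)$, and then to transfer strategies via the product/marginal construction.

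For any fixed $x \in X_m$, I would consider the attainable payoff sets
$$S(x,A) := \{(xA(1)y(1),\ldots,xA(k)y(k)) : y \in Y_{(n_1,\ldots,n_k)}\},$$
$$S(x,\bar A) := \{(x\bar A(1)\bar y,\ldots,x\bar A(k)\bar y) : \bar y \in Y_\prod\}.$$
Theorem \ref{th: transf_fed} (applied with the product construction $\bar y_{(j_1,\ldots,j_k)} = \prod_\ell y(\ell)_{j_\ell}$) gives $S(x,A) \subseteq S(x,\bar A)$, while Lemma \ref{lemm:D1toD2} (extracting marginals from $\bar y$) gives the reverse inclusion, so $S(x,A) = S(x,\bar A)$. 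Next, notice that $S(x,A)$ is the Cartesian product $\prod_{\ell=1}^k\{xA(\ell)y(\ell) : y(\ell)\in Y_{n_\ell}\}$, because the $y(\ell)$'s are chosen independently in $Y_{(n_1,\ldots,n_k)}$. Each factor is a compact interval whose maximum is $v_I^\ell(x,A)$, so $v-\max S(x,A)$ is the singleton $\{v_I(x,A)\}$ consisting of the componentwise maxima, and hence $v-\max S(x,\bar A) = \{v_I(x,A)\}$ as well.

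Plugging this into the definition of $\VMm_k$,
$$\VMm_k(\bar A) = v-\min \bigcup_{x\in X_m} v-\max S(x,\bar A) = v-\min \bigcup_{x\in X_m} \{v_I(x,A)\} = VPOSS_k(A),$$
where the last equality is exactly the definition of $VPOSS_k$. For the strategy correspondence: if $(x^*,y^*)$ is a POSS pair in $A$ with each $y^*(\ell)$ attaining $\max_y x^* A(\ell) y$, then Theorem \ref{th: transf_fed} ensures that the product $\bar y^*$ realizes $v_I(x^*,A)$ in $\bar A$; being a $v-\max$ element of $S(x^*,\bar A)$ while $x^*$ provides a $v-\min$, the pair $(x^*,\bar y^*)$ is minimax. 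Conversely, from a minimax pair $(x^*,\bar y^*)$ in $\bar A$, Lemma \ref{lemm:D1toD2} applied to $\bar y^*$ delivers a $y^* \in Y_{(n_1,\ldots,n_k)}$ achieving the same payoff vector, so $(x^*,y^*)$ is a POSS pair in $A$.

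The main obstacle is justifying that $v-\max S(x,A)$ collapses to a single vector. This hinges on the fact that in $\mathcal{D}_2$ player II can choose $y(1),\ldots,y(k)$ independently, so $S(x,A)$ really is a Cartesian product — in contrast with $\mathcal{D}_1$, where the same $y$ is used across criteria and the Pareto frontier of attainable payoffs is typically much richer. The remainder of the proof is essentially bookkeeping with the payoff-preserving bijection between marginals and product distributions supplied by Theorem \ref{th: transf_fed} and Lemma \ref{lemm:D1toD2}.
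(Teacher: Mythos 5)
Your handling of the first component is correct and is essentially a sharpened version of the paper's first bullet: where the paper only remarks that the columns of $\bar A(\ell)$ are those of $A(\ell)$ repeated, you make explicit why, for fixed $x$, the attainable payoff set in $\bar A$ is a Cartesian product of intervals (marginals of $\bar y$ can be prescribed independently, by the product construction of Theorem \ref{th: transf_fed} and the marginal extraction of Lemma \ref{lemm:D1toD2}), so that $v-\max S(x,\bar A)$ collapses to the single vector $v_I(x,A)$ and hence $VPOSS_k(A)=\VMm_k(\bar A)$. This gives the equivalence ``$x^*$ is a POSS for I in $A$ iff $x^*$ is an extended minimax strategy for I in $\bar A$'' rigorously, and is in fact more careful than the paper on exactly the point you flag.

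The gap is in the second component of the pair. In the statement, and in the paper's proof, ``pair of POSS'' means that $x^*$ is a POSS for player I \emph{and} $y^*$ is a POSS for player II, i.e.\ the vector of II's own security levels $\bigl(\min_{x\in X_m} xA(1)y^*(1),\ldots,\min_{x\in X_m} xA(k)y^*(k)\bigr)$ is Pareto-maximal; correspondingly, ``pair of minimax strategies in $\bar A$'' requires $\bar y^*$ to be an extended \emph{maximin} strategy of player II in $\bar A$. You instead read $y^*$ as a best response to $x^*$ (each $y^*(\ell)$ attaining $\max_y x^*A(\ell)y$) and conclude that $(x^*,\bar y^*)$ is ``minimax'' because $\bar y^*$ is a $v$-max element against $x^*$; neither direction of this argument says anything about II's own security levels in $A$ or II's maximin property in $\bar A$, so the second half of the equivalence (the paper's second bullet) is simply not proved. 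Note also that your collapsing argument has no counterpart on II's side: against a fixed $\bar y$, player I uses one common $x$ across all criteria, so the guarantee set $\{(xA(1)y(1),\ldots,xA(k)y(k)) : x\in X_m\}$ is \emph{not} a product and its $v$-min need not be a singleton. The paper's proof of this half proceeds differently, using the payoff identity $xA(\ell)y^*(\ell)=x\bar A(\ell)\bar y^*$ for all $x\in X_m$ (from Theorem \ref{th: transf_fed}, with Lemma \ref{lemm:D1toD2} for the converse direction) to identify player II's guarantees in the two games; some argument of this kind for $y^*$ must be added before your proof establishes the stated theorem.
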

\begin{proof}

\begin{itemize}
\item
Let us first see that $x^*$ is a POSS for I in $A$ if and only if $x^*$ is minimax for I in $\bar A$. For any strategy of {player} II, the worst payoff player I can obtain from $A(\ell)$ is the same  as the worst payoff he/she can obtain from $\bar{A}(\ell)$ (note that the columns of $\bar{A}(\ell)$ are the same as in ${A}(\ell)$, but repeated and in different order). And therefore the concept of POSS for I in $A$, and the concept of minimax for I in $\bar A$ coincide.
\item
Now let us see that $y^*$ is POSS for II in $A$ if and only if $\bar y^*$ is {maximin} for II in $\bar A$. Because the payoffs are the same in both games, that is, for any $x \in X_m$, $x A(\ell) y^* = x \bar A(\ell) \bar y^*$, we have that for any strategy of I, the worst payoff player II can obtain from $A$ is the same as the worst payoff player II can obtain for $\bar A$, and therefore the concept of POSS for II in $A$ coincide with the concept of {maxmin} for II in $\bar A$.

\end{itemize}
\end{proof}

\section{Conclusions}
Our results in sections \ref{sec: poss} and \ref{sec: minmax} show that
multicriteria minimax and Pareto-optimal security payoffs are rather
similar since they can be characterized with similar set of
properties. The main difference between them comes from the different
form of consistency that each solution concepts satisfies. On the one
hand, in the minimax case the consistency requires weighted sum of
matrices, thus reducing the analysis to games with smaller number of
criteria but with the same set of strategies.  On the other hand, in
the Pareto-optimal security case consistency gives rise also to
smaller number of criteria but modifying the set of strategies in
one of the scalar component games. This difference is very important
and it explains the different structure of the corresponding set of
strategies: \PO strategies are well-known to be a connected union of
polyhedral sets (see \cite{fernandez96}) whereas multicriteria
minimax strategies are not, in general, finite union of polytopes
(see \cite{Voorequi}).

\section{Acknowledgments}
We would like to thank Dr. Francisco R. Fernández for his useful comments on  earlier versions of this paper. The authors also want to acknowledge the financial support from grants FQM-5849 (Junta de Andaluc\'ia$\backslash$FEDER) and MTM2010-19576-C02-01, {MTM2013-46962-C02-01} (MICINN, Spain).

\bibliographystyle{alpha}
{}

\end{document}